\pgfplotsset{compat=1.18}
\newcommand{\R}{\mathbb R}
\newcommand{\C}{\mathbb C}
\newcommand{\Q}{\mathbb Q}
\newcommand{\Z}{\mathbb Z}
\newcommand{\N}{\mathbb N}
\renewcommand{\S}{\mathbb S}
\newtheorem{theorem}{Theorem}[section]
\newtheorem{lemma}[theorem]{Lemma}
\newtheorem{proposition}[theorem]{Proposition}
\newtheorem{corollary}[theorem]{Corollary}
\newtheorem*{theorem*}{Theorem}
\newtheorem*{lemma*}{Lemma}
\theoremstyle{definition}
\newtheorem{example}[theorem]{Example}
\newtheorem{remark}[theorem]{Remark}
\numberwithin{equation}{section}
\numberwithin{figure}{section}
\begin{document}

\title[Spherical Ricci tori
 with rotational symmetry]{Spherical Ricci tori
 with rotational symmetry}

\author[I. Domingos]{Iury Domingos}
\address{Universidade Federal de Alagoas\\
Av. Manoel Severino Barbosa S/N,
57309-005 Arapiraca - AL, Brazil}
\email{iury.domingos@arapiraca.ufal.br}

\author[I. I. Onnis]{Irene I. Onnis}
\address{Universit\`a degli Studi di Cagliari\\
Dipartimento di Matematica e Informatica\\
Via Ospedale 72, 09124 Cagliari, Italy}
\email{irenei.onnis@unica.it}

\thanks{The authors were partially supported by the Brazilian National Council for Scientific and Technological Development (CNPq), grant no.~409513/2023-7, and by the ICTP-INDAM Research in Pairs Programme 2025. I.I.~Onnis was supported by the Fondazione di Sardegna and partially funded by the PNRR e.INS Ecosystem of Innovation for Next Generation Sardinia (CUP~F53C22000430001, codice MUR ECS00000038).
}

\keywords{Ricci surfaces, Minimal surfaces, Rotational surfaces.}

\subjclass{53C42,  53C40}

\begin{abstract}
In this article, we study $c$-spherical Ricci metrics, that is, Riemannian metrics whose Gaussian curvature $K$ satisfies
\begin{equation*}
    (K - c)\Delta K - |\nabla K|^2 - 4K(K - c)^2 = 0,
\end{equation*}
for some $c>0$. We explicitly construct a two-parameter family of such metrics with rotational symmetry and show that infinitely many non-isometric examples can be realized on the same torus. Moreover, we investigate their realization as induced metrics on compact rotational surfaces in $\mathbb{S}^3$, establishing the existence of embedded compact spherical Ricci surfaces by controlling a period function associated with the isometric immersion.

\end{abstract}

\maketitle

\section{Introduction}

The study of whether a local minimal isometric immersion of a Riemannian surface \((\Sigma,\dif\sigma^2)\) into a $3$-dimensional space form \(\mathbb{M}_c^3\) of constant sectional curvature \(c\) exists can be approached intrinsically. A natural obstruction to the existence of such a minimal immersion arises from the Gauss equation: namely, the Gaussian curvature \(K\) of \(\dif\sigma^2\) must satisfy \(K \leq c\).

In the case of \(\mathbb{R}^3\), G. Ricci-Curbastro \cite{Ricci-Curbastro} showed that if \(K < 0\) and $\Delta\log(-K) = 4K$, a local minimal immersion exists. This was generalized by B.~H.~Lawson \cite{Lawson-1970}, who proved that a surface with \(K < c\) admits a local minimal immersion into \(\mathbb{M}_c^3\) if  \(K\) satisfies $\Delta\log(c-K) = 4K$, or equivalently,
\begin{equation}\label{Ricci-condition}
(K - c)\Delta K - |\nabla K|^2 - 4K(K - c)^2 = 0.
\end{equation}
When \(c = 0\), this condition is often referred to as the \emph{Ricci condition}.

In this manuscript, we restrict our attention to the case \(c > 0\). Following Lawson \cite{Lawson-71}, a Riemannian surface \((\Sigma,\dif\sigma^2)\) satisfying this equation is called a \emph{$c$-spherical Ricci surface}, with \(\dif\sigma^2\) a \emph{$c$-spherical Ricci metric}. Clearly, the condition depends only on the metric and does not require \((K - c)\) to be negative.

Our interest lies in constructing $c$-spherical Ricci tori with rotational symmetry. Except for the round metric, no $c$-spherical Ricci metric has \((K-c) \neq 0\) at any point. In the genus-one case, the only known examples were constructed in \cite{Daniel-Zang} and their metrics coincide with those induced on the Delaunay constant mean curvature surfaces in \(\mathbb{R}^3\).

Our construction is based on positive solutions of an ordinary differential equation of the form
\[
(f')^2 + A\,f^2 + B\,f^{-2} = C,
\]
with constants \(A, B, C \in \mathbb{R}\) chosen appropriately. This equation also arises in the study of rotational constant mean curvature (CMC) surfaces in \(\mathbb{S}^3\) \cite{Otsuki1970,Wei2006}, where there is a one-to-one correspondence between positive solutions and rotational CMC surfaces.

Although $c$-spherical Ricci surfaces always admit a local minimal isometric immersion into the $3$-sphere \(\S^3_c\) of sectional curvature $c$, we aim to study complete immersions without extrinsic constraints, constructing explicit compact examples in \(\S^3_c\).

This work is organized as follows: In Section 2, we fix notations and recall previous results on generalized Ricci surfaces. We then focus on generalized Ricci warped metrics of type $(a,0,c)$, expressing the generalized Ricci condition in terms of the warping function, which yields a third-order differential equation.

In Section 3, we reduce the study of the warping function to an associated autonomous system and seek a positive periodic solution, which allows us to construct a two-parameter family of generalized Ricci metrics with rotational symmetry on surfaces of genus one. This intrinsic construction recovers all previously known examples of generalized Ricci tori of type $(a,0,c)$.

In Section 4, we consider $c$-spherical Ricci surfaces (type $(4,0,c)$), explicitly constructing a two-parameter family of rotational metrics. Theorem~\ref{thm:ricci-tori} shows that infinitely many non-isometric metrics in this family can be realized on the same torus.

Finally, in Section 5, we study conditions under which these metrics can be realized as induced metrics on compact rotational surfaces in \(\mathbb{S}^3\) (Theorems~\ref{thm:immersions} and \ref{thm:embedded}), analyzing a period function associated with the isometric immersion and controlling the admissible parameters to guarantee embedded examples.

\subsection*{Acknowledgements} The authors are grateful to Oscar Perdomo for his valuable suggestions and for drawing our attention to the embedded examples discussed in Section 5. I.~Domingos also wishes to express his sincere gratitude to the Mathematics Department of the Università degli Studi di Cagliari, where this work was initiated, and to the Abdus Salam International Centre for Theoretical Physics -- ICTP for its warm hospitality during the Research in Pairs Programme in June 2025, where this work was completed.

\section{Preliminaries}\label{sec:preliminaries}

\subsection{Generalized Ricci metrics with rotational symmetry}\label{sec:generalized-ricci-surfaces}
Given a smooth connected Riemannian surface $(\Sigma, \dif \sigma^2)$ and a point $(a, b, c) \in \R^3$, such as defined in \cite{Daniel-Zang}, the metric $\dif\sigma^2$ is said to be a {\it generalized Ricci metric of type $(a, b, c)$} if its Gaussian curvature $K$ satisfies the equation:
\begin{equation}\label{GRC}
    (K - c) \Delta K - \|\nabla K\|^2 - (a\,K + b)(K - c)^2 = 0,
\end{equation}
where $\nabla$ and $\Delta$ stand the gradient and
the Laplace-Beltrami operators of $\dif\sigma^2$, respectively.
In this case, $(\Sigma, \dif\sigma^2)$ is called a {\it generalized Ricci surface of type $(a, b, c)$}.

In particular, generalized Ricci surfaces of type $(4, 0, 0)$ are Ricci surfaces in the sense of \cite{Moroianu15}.  Therefore, equation \eqref{GRC} will be referred to as the {\it generalized Ricci condition of type $(a, b, c)$}.

We need to recall some results and important facts about generalized Ricci surfaces that will be used throughout this work; for a more detailed exposition, we refer to \cite{Daniel-Zang}. Firstly, observe that on an open set where $(K - c)$ does not vanish,
\[
    \Delta \log |K - c| = \frac{(K - c) \Delta K - |\nabla K|^2}{(K - c)^2}
\]
and, therefore, equation \eqref{GRC} is equivalent to
\begin{equation}\label{GRC-2}
    \Delta \log |K - c| = a\,K + b.
\end{equation}
 Furthermore, in a generalized Ricci surface, either $K = c$ or the zeros of $(K - c)$ are isolated. In particular, the function $(K - c)$ does not change sign.

One can see directly from \eqref{GRC} that given a generalized Ricci metric $\dif\sigma^2$ of type $(a, b, c)$ and $\eta > 0$, then $\eta \dif\sigma^2$ is a generalized Ricci metric of type $(a, b / \eta^2, c / \eta^2)$. Thus, in the case $b = 0$, up to scaling, only the sign of $c$ is relevant in the study of these metrics. Also, metrics with constant Gaussian curvature $K$ are generalized Ricci metrics of type $(a, b, c)$ if and only if $K = c$ or $a\,K + b = 0$.

Regarding the topology, the presence of a generalized Ricci condition may impose obstructions on the existence of compact orientable surfaces of non-constant Gaussian curvature endowed with such a condition. For example, restricting ourselves to the case $b = 0$, the following results hold:
\begin{itemize}
\item For compact generalized Ricci spheres of type $(a, 0, c)$:
\begin{itemize}
    \item if $c = 0$, then $a \in -2 \N^*$ is a necessary and sufficient condition;
    \item if $c \neq 0$, then $a \in -\N^*$ is a necessary condition and $a \in -2 \N^*$ is a sufficient condition.
\end{itemize}
\item For  surfaces of genus $1$, a necessary and sufficient condition is $a\, c > 0$.
\item For compact surfaces with genus $g \geq 2$,
\begin{itemize}
    \item if $c \leq 0$, then $(g - 1)\,a \in \N^*$ is a necessary and sufficient condition;
    \item if $c > 0$, then $(g - 1)\,a \in \N^*$ is a necessary condition.
\end{itemize}
\end{itemize}
In particular, in the class of generalized Ricci surfaces of type $(4, 0, c)$ with non-constant Gaussian curvature, there are no spheres, and tori appear only when $c > 0$; however, examples of compact surfaces with $g \geq 2$ appear in abundance.

\subsection{SO(2)-invariant metrics}
From now on, we turn our attention to the case of generalized Ricci metrics of type $(a, 0, c)$ that possess rotational symmetry.

In what follows, we denote by $\S^1(r)$ the circle in $\R^2$ with radius $r$. For convenience, we set $\S^1 = \S^1(1)$. Note that $\S^1(r) \cong \R / 2\pi r \Z$ for $r > 0$, where $\Z$ denotes the set of all integers.

Let $I \subset \R$ be an open interval and consider  the warped metric $\dif\sigma^2 = \dif s^2 + f(s)^2 \dif t^2$, where $f: I \to \R$ is a positive smooth function and $t\in\S^1$. We can define an action of the special orthogonal group $\mathbf{SO}(2)$ on the surface  $I\times\S^1$ whose orbits are precisely $\{s\} \times \S^1$, for $s \in I$. This is an isometric action, and we say that $\dif\sigma^2$ is a {\it rotationally symmetric metric}.

We restrict ourselves to any open set where $(K - c)$ does not vanish. We observe that the generalized Ricci condition takes an approachable form in terms of the function $f$. By a direct computation using $\dif\sigma^2$ and the expression of its Gaussian curvature $K = -f'' / f$, equation \eqref{GRC-2} yields
\[
    \big[ f\,(\log|K - c|)'  \big]' + a\, f'' = 0,
\]
where $'$ denotes the derivative with respect to $s$. By integrating the above equation and using $f^{a}(K-c) = -f^{a-1}(f'' + cf)$, the problem of finding a generalized Ricci metric $\dif\sigma^2 = \dif s^2 + f(s)^2 \dif t^2$ of type $(a, 0, c)$ is equivalent to find a positive solution $f: I \to \R_+$ of
\begin{equation}\label{ODE}
    \big[f^{a-1}(f'' + c f)\big]' - A\, f^{a-2}(f'' + c f) = 0, \quad \text{for} \quad A \in \mathbb{R}.
\end{equation}

\begin{remark}
If $f: I \to \R$ is a positive smooth function that satisfies $f'' + c\,f = 0$ on $I$, then it is a particular solution of \eqref{ODE}. In this case, the metric $\dif\sigma^2$ has constant Gaussian curvature $c$.
\end{remark}

\begin{remark}
For $(a,b,c) = (4,0,0)$, the rotationally invariant metrics satisfying the Ricci condition, in the sense of \cite{Moroianu15}, were studied and classified by the first author in \cite{dCDS,DSV2023}. In this case, the above equation is equivalent to the first-order ordinary differential equation $$f'(s) f(s) = A\, f(s) + B\, s + C,$$ for appropriate $A, B, C \in \R$.
\end{remark}

\subsection{An associated autonomous system}
By considering $A=0$, equation \eqref{ODE} gives rise to an autonomous second-order differential equation
\begin{equation}\label{f''}
  f'' = m\,f^{1-a} - c\,f,\qquad  m \in \R,
\end{equation}
whose solution produces a metric of constant Gaussian curvature when $ m=0$. To address the general case, for a fixed $ m\in\R^*$ the problem is reduced to analyzing the nonlinear system
\[\begin{cases}
  x' = y, & \\
  y' =  m\, x^{1-a} - c\, x. &
\end{cases}\tag{$S_{m}$}\label{system}\]
The equilibria of \eqref{system} are the points $(x_*, 0)$ such that $m\, x_*^{1-a} - c\, x_* = 0$, corresponding to the constant solutions $x(s) = x_*$ and $y(s) = 0$, provided $x_*$ is well-defined. We may verify that system \eqref{system} is conservative, and so the total energy $E_m(x, y) = y^2/2+P_ m(x)$ is conserved along its solutions, where
\begin{equation}\label{potential}
P_ m(x) = \begin{cases}
  \displaystyle
   -  m \log x + \frac{c\, x^2}{2}, & \mbox{when } a = 2, \\[1em]
  \displaystyle
   - \frac{ m\, x^{2-a}}{2-a} + \frac{c\, x^2}{2}, & \mbox{when } a \neq 2,
\end{cases}
\end{equation}
is the potential function of \eqref{system}. Thus, any solution $(x(s), y(s))$  satisfies $E_{m}(x(s), y(s)) = \ell$, for some $\ell \in \R$. In addition, given an initial condition $(x(0), y(0)) = (x_0, y_0)$, a nonempty curve $\{(x,y):E_{m}(x, y) = \ell\}$ singles out a unique solution of the system, where $\ell$ is such that $(x_0, y_0) \in \{(x,y):E_ m(x, y) = \ell\}$.

\section{Constructing generalized Ricci tori}\label{sec:generalized-ricci}
Before addressing the construction of compact generalized Ricci tori of type \((a, 0, c)\) with rotational symmetry, let us first explain why we focus directly on the genus \(1\) case.

Consider the genus \(0\) case, where we assume \( a \in -\mathbb{N}^* \). Let \( (\Sigma, \dif \sigma^2) \) be a Riemannian $2$-sphere viewed as a warped product \( (0, \delta) \times \mathbb{S}^1 \), endowed with the metric \( \dif s^2 + f(s)^2 \dif t^2 \). Here, \( f(s) \) is a positive function that extends smoothly to \([0, \delta]\). This means that \( f > 0 \) on \( (0, \delta) \) and satisfies
\begin{equation}\label{sphere-condition}
  f(0) = f(\delta) = 0, \quad f'(0) = -f'(\delta) = 1, \quad \text{and} \quad f^{(2n)}(0) = f^{(2n)}(\delta) = 0,
\end{equation}
for all \( n \in \mathbb{N}^* \). These conditions ensure the smoothness of the metric \( \dif s^2 + f(s)^2 \dif t^2 \) on \([0, \delta] \times \mathbb{S}^1\) (see, e.g., \cite{petersen-book}).

To obtain a Ricci metric of type \( (a,0,c) \) on \( \Sigma \), we seek a smooth nonnegative function \( f: [0, \delta] \to \mathbb{R} \) such that \( f(s) \) is a solution of equation \eqref{f''} on \( (0,\delta) \) and satisfies the conditions in \eqref{sphere-condition}, where \( \delta = \delta_{a,c,m} > 0 \) is to be determined. Since \( m \in \mathbb{R}^* \), from \eqref{sphere-condition} it follows that \( -a \in 2\mathbb{N} \) is a necessary condition. Furthermore, for \( -a \in 2\mathbb{N} \), solutions exist and, up to a change of coordinates, they coincide with those in \cite[Proposition 5.7, with \( \tau=0 \)]{Daniel-Zang} for \( c = 0 \), as well as in \cite[Proposition 5.9]{Daniel-Zang} for \( c \in \mathbb{R}^* \). Hence, we proceed directly to the genus \( 1 \) case.

\subsection{Generalized Ricci tori of type \texorpdfstring{$(a,0,c)$}{(a,0,c)} with \texorpdfstring{$a\,c>0$}{ac>0}}
In order to construct examples of generalized Ricci tori of type \((a, 0, c)\) with rotational symmetry, we will restrict our analysis to the case \( a \,c > 0 \). Accordingly, we consider the universal covering by setting \( t \in \mathbb{R} \) and seek a positive \( T \)-periodic solution \( f: I \to \mathbb{R} \) of \eqref{f''}. Once such a solution is obtained, the metric \( \dif s^2 + f(s)^2 \dif t^2 \) defines a Ricci metric of type \((a, 0, c)\) on \( \mathbb{R}^2 \). Additionally, since \( f \) is a \( T \)-periodic function, for any lattice
\[
\Gamma_T(\gamma_1, \gamma_2) = \left(T, \gamma_1\right)\mathbb{Z} \oplus (0 , \gamma_2)\mathbb{Z},
\]
with \( (\gamma_1, \gamma_2) \in \mathbb{R} \times \mathbb{R}^* \), this solution induces a well-defined Ricci metric of type \((a, 0, c)\) on the torus \( \mathbb{R}^2 / \Gamma_T(\gamma_1, \gamma_2) \), by passing the metric \( \dif s^2 + f(s)^2 \dif t^2 \) from \( \mathbb{R}^2 \) onto the quotient space.

We analyze the system \eqref{system} under the assumption that $m \in \R^*$ satisfies $c \, m>0$. Firstly, this condition ensures the existence of a unique equilibrium point of \eqref{system}, corresponding to the constant solution $(x(s),y(s))=(x_*, 0)$, with $x_*=(m/c)^{1/a}>0$.

Since our focus lies on periodic solutions of \eqref{system}, which are implicitly defined by $E_m(x, y) = \ell$, provided that the level set $\{(x,y):E_m(x, y) = \ell\}$ is nonempty for some $\ell \in \R$, we establish below the condition for this set to be a nonempty level set that describes a compact curve surrounding the equilibrium point $(x_*, 0)$.

\begin{proposition}\label{prop:l}
Let $a, c, m \in \R^*$ have the same sign. Then the level set $\{(x,y):E_m(x, y) = \ell\}$ is a compact smooth curve surrounding the equilibrium point $(x_*, 0)$ if and only if
\[P_m(x_*)<\ell<\lim_{x\to 0}P_m(x),\]
where $P_m(x)$ is the potential function of \eqref{system}.
\end{proposition}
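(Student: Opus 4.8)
The plan is to reduce the two-dimensional question about the level set $\{(x,y):E_m(x,y)=\ell\}$ to a one-dimensional study of the potential $P_m$ on $(0,\infty)$, exploiting that $E_m(x,y)=y^2/2+P_m(x)$ splits into a convex function of $y$ and the potential. Writing the level set as the union of the two graphs $y=\pm\sqrt{2(\ell-P_m(x))}$ over the sublevel set $\{x\in(0,\infty):P_m(x)\le\ell\}$, the features we must control — nonemptiness, compactness, smoothness, and the fact that the curve encloses $(x_*,0)$ — are all dictated by the shape of $P_m$, and in particular by whether this sublevel set is a compact subinterval of $(0,\infty)$.

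First I would analyze the critical points and monotonicity of $P_m$. A direct computation gives $P_m'(x)=cx-m\,x^{1-a}=cx\bigl(1-(x_*/x)^a\bigr)$, using $m=c\,x_*^a$, so the only zero on $(0,\infty)$ is $x=x_*$. Because $a,c,m$ share the same sign, the sign analysis is the same in both the all-positive and all-negative cases and yields $P_m'<0$ on $(0,x_*)$ and $P_m'>0$ on $(x_*,\infty)$; equivalently $P_m''(x_*)=c\,a>0$. Hence $x_*$ is a strict global minimum and $P_m$ is strictly monotone on each side of it.

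Next I would record the boundary behavior: inspecting the dominant term of \eqref{potential} gives $\lim_{x\to\infty}P_m(x)=+\infty$ in every case, while strict monotonicity on $(0,x_*)$ forces $\lim_{x\to 0^+}P_m(x)$ to exist in $(P_m(x_*),+\infty]$, equal to $+\infty$ when $a\ge 2$ and finite otherwise. Combining these facts, for $\ell>P_m(x_*)$ there is always a unique right endpoint $x_2>x_*$ with $P_m(x_2)=\ell$, whereas a left endpoint $x_1\in(0,x_*)$ with $P_m(x_1)=\ell$ exists if and only if $\ell<\lim_{x\to 0^+}P_m(x)$. Thus the sublevel set equals the compact interval $[x_1,x_2]\Subset(0,\infty)$ exactly when $P_m(x_*)<\ell<\lim_{x\to 0^+}P_m(x)$; outside this range it is empty, the single point $\{x_*\}$, or a set whose closure meets the forbidden boundary $x=0$, which forces the level curve to be non-compact.

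Finally I would upgrade this to the stated conclusion. Smoothness is automatic: $\nabla E_m=(P_m'(x),y)$ vanishes only at the equilibrium $(x_*,0)$, which is off $\{E_m=\ell\}$ once $\ell>P_m(x_*)$, so the level set is a smooth embedded $1$-manifold, and the two branches $y=\pm\sqrt{2(\ell-P_m(x))}$ join smoothly at the turning points $(x_1,0)$ and $(x_2,0)$ (where $P_m'\neq 0$) to form a single simple closed curve; since $x_1<x_*<x_2$ and $E_m(x_*,0)=P_m(x_*)<\ell$, this curve encloses the equilibrium. Compactness follows from $[x_1,x_2]\Subset(0,\infty)$ together with boundedness of $y$. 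I expect the only genuinely delicate point to be the endpoint $x=0$: one must verify, across all sign and range configurations of $a$, that $\lim_{x\to 0^+}P_m(x)$ exists and that compactness fails precisely when $\ell$ reaches this limit — distinguishing the finite-limit case, where the curve limits onto the removed boundary, from the infinite-limit case, where the upper constraint on $\ell$ becomes vacuous. Everything else reduces to one-variable intermediate-value and monotonicity arguments.
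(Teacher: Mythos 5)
Your proposal is correct and follows essentially the same route as the paper: both proofs reduce the question to the shape of the potential $P_m$ on $(0,+\infty)$ — strict minimum at $x_*$, monotonicity on either side, and the boundary limits as $x\to 0^+$ (finite for $a<2$, infinite for $a\ge 2$) and $x\to+\infty$ — and then read off the level-set geometry. The only difference is that you make explicit the steps the paper leaves implicit (the graph decomposition $y=\pm\sqrt{2(\ell-P_m(x))}$, the regular-value argument for smoothness, and the non-compactness when $\ell$ reaches the limit at $x=0$), which is a welcome elaboration rather than a different method.
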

\begin{proof}
Let $P_m(x)$ be the potential function defined in \eqref{potential}, for $x \in (0, +\infty)$. Since, by hypothesis, $a, c, m \in \R^*$ have the same sign, it can be verified that $P_m(x)$ attains its minimum at $x_* = (m/c)^{1/a}$, decreases on $(0, x_*)$, and increases on $(x_*, +\infty)$. Moreover:
\begin{enumerate}
    \item If $a \in (-\infty, 0) \cup (0, 2)$, then $P_m(x_*) < 0$, $P_m(x)$ approaches $0$ as $x \to 0$, and diverges to $+\infty$ as $x \to +\infty$.
    \item If $a \geq 2$, then $P_m(x)$ diverges to $+\infty$ as $x \to 0$ or $x \to +\infty$.
\end{enumerate}
Therefore, the level set $\{(x,y) : E_m(x, y) = \ell\}$ defines a compact smooth curve surrounding the equilibrium point $(x_*, 0)$ if and only if the stated condition is satisfied.
\end{proof}

From Proposition \ref{prop:l} it follows that system~\eqref{system} admits a one-parameter family of periodic solutions within $(0, +\infty) \times \R$. From this family, we derive a two-parameter family of generalized Ricci metrics of type $(a, 0, c)$ with rotational symmetry. Specifically, let $a, c \in \R^*$ with $a \,c > 0$, and $m, \ell\in\R$ such that $c\,m>0$ and
\[\mathrm{min} \, P_m(x)<\ell<\lim_{x\to 0}P_m(x).\]
We let $(f_{m,\ell}(s), f_{m,\ell}'(s))$ denote the periodic solution contained in the level set $\{(x,y) : E_m(x, y) = \ell\}$, with minimal period $T_{m,\ell} > 0$. By construction, $f_{m,\ell}(s)$ is a positive global solution of the equation $f'' = m\, f^{1-a} - c\, f$ and $T_{m,\ell}$ may be computed by
\begin{equation}\label{period}
  T_{m,\ell} = \sqrt{2}\int_{x_{m,\ell}^{-}}^{x_{m,\ell}^+}\frac{\dif x}{\sqrt{\ell-P_m(x)}},
\end{equation}
where $x_{m,\ell}^-$ and $x_{m,\ell}^+$ are the two positive roots of $P_m(x)=\ell$, with $x_{m,\ell}^-<x_{m,\ell}^+$.

Given \((\gamma_1, \gamma_2) \in \mathbb{R} \times \mathbb{R}^*\), let \(\Gamma_{T_{m,\ell}}(\gamma_1, \gamma_2) = (T_{m,\ell}, \gamma_1)\mathbb{Z} \oplus (0, \gamma_2)\mathbb{Z}\) be a lattice in \(\mathbb{R}^2\). We state the following result:

\begin{proposition}\label{prop:f}
Let $a, c \in \R^*$, with $a \,c > 0$. Then, there exists a two-parameter family of generalized Ricci metrics $\{\dif\sigma_{m,\ell}^2\}$ of type $(a, 0, c)$ on $\R^2$  that possess rotational symmetry, given by
\[
\dif\sigma^2_{m,\ell} = \dif s^2 + f_{m,\ell}(s)^2 \dif t^2,
\]
where $f_{m,\ell}$ is the positive $T_{m,\ell}$-periodic solution of equation $f'' = m\, f^{1-a} - c\, f$ with energy $\ell$. In addition, given \((\gamma_1, \gamma_2) \in \mathbb{R} \times \mathbb{R}^*\), $\R^2/\Gamma_{T_{m,\ell}}(\gamma_1, \gamma_2)$ endowed with the metric $\dif\sigma_{m,\ell}^2$ is a generalized Ricci torus of type $(a,0,c)$.
\end{proposition}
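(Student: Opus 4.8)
The plan is to assemble the proposition directly from the machinery established in Proposition~\ref{prop:l} and the period formula \eqref{period}, treating the two assertions — the existence of the family of metrics on $\R^2$, and the descent to the torus — as separate but linked steps. The conceptual work has already been done: the heavy lifting is in Proposition~\ref{prop:l}, which characterizes exactly when a level set of the conserved energy $E_m$ is a compact smooth curve around the equilibrium $(x_*,0)$, and this proposition merely packages that dynamical picture into the language of Ricci metrics.

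First I would fix $a,c\in\R^*$ with $ac>0$ and, for each admissible pair $(m,\ell)$ — that is, $cm>0$ and $P_m(x_*)<\ell<\lim_{x\to0}P_m(x)$ — invoke Proposition~\ref{prop:l} to obtain a compact smooth curve $\{E_m(x,y)=\ell\}$ encircling $(x_*,0)$. Standard ODE theory for planar conservative systems then guarantees that the solution of \eqref{system} tracing this curve is periodic; I would record that its minimal period $T_{m,\ell}$ is finite and positive, given by the integral \eqref{period}, where $x^-_{m,\ell}<x^+_{m,\ell}$ are the two turning points (the positive roots of $P_m(x)=\ell$) at which $y=0$. The crucial qualitative point, which I would state explicitly, is that the entire orbit lies in $\{x>0\}$: since the curve surrounds $x_*=(m/c)^{1/a}>0$ and is bounded away from the $y$-axis (the level set is compact and does not meet $x=0$, as $\ell<\lim_{x\to0}P_m(x)$), the first coordinate $f_{m,\ell}(s)=x(s)$ is a \emph{positive} $T_{m,\ell}$-periodic function. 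Because $f_{m,\ell}$ solves $f''=m\,f^{1-a}-c\,f$, which is exactly equation \eqref{f''} with $A=0$, and is positive and smooth on all of $\R$, the discussion of Section~\ref{sec:generalized-ricci-surfaces} shows that $\dif\sigma^2_{m,\ell}=\dif s^2+f_{m,\ell}(s)^2\,\dif t^2$ is a generalized Ricci metric of type $(a,0,c)$ on $\R^2$. Letting $(m,\ell)$ vary over the two-dimensional admissible region yields the asserted two-parameter family.

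For the descent to the torus, I would verify that for any $(\gamma_1,\gamma_2)\in\R\times\R^*$ the metric $\dif\sigma^2_{m,\ell}$ on $\R^2$ is invariant under the lattice $\Gamma_{T_{m,\ell}}(\gamma_1,\gamma_2)=(T_{m,\ell},\gamma_1)\Z\oplus(0,\gamma_2)\Z$, so that it passes to the quotient. Translation by the generator $(0,\gamma_2)$ is an isometry because the coefficients $\dif s^2$ and $f_{m,\ell}(s)^2$ depend only on $s$ and not on $t$. Translation by $(T_{m,\ell},\gamma_1)$ sends $(s,t)\mapsto(s+T_{m,\ell},\,t+\gamma_1)$, and since $f_{m,\ell}$ has period exactly $T_{m,\ell}$ we have $f_{m,\ell}(s+T_{m,\ell})=f_{m,\ell}(s)$, so again the metric is preserved. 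Hence $\dif\sigma^2_{m,\ell}$ descends to a well-defined smooth metric on the compact quotient $\R^2/\Gamma_{T_{m,\ell}}(\gamma_1,\gamma_2)$, a torus, and the generalized Ricci condition of type $(a,0,c)$ is a local (pointwise) condition on the Gaussian curvature, hence inherited by the quotient.

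The only genuinely delicate point — and the one I would be careful not to gloss over — is the periodicity and strict positivity of the orbit, namely that $T_{m,\ell}$ in \eqref{period} is finite and that $f_{m,\ell}$ never vanishes. Both follow from the compactness and location of the level curve provided by Proposition~\ref{prop:l}: finiteness of the integral requires that $\ell-P_m(x)$ have only simple (square-root) zeros at the turning points $x^\pm_{m,\ell}$, which holds because $P_m$ is strictly monotone on each side of its unique nondegenerate minimum $x_*$, so that $P_m'(x^\pm_{m,\ell})\neq0$ and the integrand is integrable there. Everything else is a matter of transcribing the phase-plane picture into metric language, so I expect no serious obstacle beyond this verification.
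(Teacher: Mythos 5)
Your proposal is correct and follows essentially the same route as the paper's own (much terser) proof: invoke Proposition~\ref{prop:l} to obtain the positive periodic solution $f_{m,\ell}$, use the Section~\ref{sec:generalized-ricci-surfaces} equivalence between equation~\eqref{f''} and the generalized Ricci condition to get the metric on $\R^2$, and use $T_{m,\ell}$-periodicity for the descent to the quotient torus. The details you add — strict positivity of the orbit, finiteness of the period integral via simple zeros of $\ell-P_m$ at the turning points, and the explicit lattice-invariance check — are exactly the points the paper leaves implicit, and your treatment of them is sound.
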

\begin{proof}
We observe that \(\dif\sigma_{m,\ell}^2\) defines a generalized Ricci metric of type \((a, 0, c)\) on \(\mathbb{R}^2\). Since \(f_{m,\ell}\) is a \(T_{m,\ell}\)-periodic function on \(\mathbb{R}\), the metric \(\dif\sigma_{m,\ell}^2\) descends to the torus \(\mathbb{R}^2 / \Gamma_{T_{m,\ell}}(\gamma_1, \gamma_2)\), for any \((\gamma_1, \gamma_2) \in \mathbb{R} \times \mathbb{R}^*\).
\end{proof}

\begin{remark}
The above result includes the class of Ricci metrics presented in \cite[Example 5.13]{Daniel-Zang}, referred to as Delaunay-type metrics, when $m = c$. Specifically, let $F_{m,\ell}(s)$ be defined as
\[
F_{m,\ell}(s) = \int_{0}^{s} \frac{\dif\varsigma}{f_{m,\ell}(\varsigma)}.
\]
Since $f_{m,\ell}(s)$ is a $T_{m,\ell}$-periodic function on $\R$, it is bounded and, consequently, $f_{m,\ell}(s)^{-1}$ is also bounded. Moreover, the function $F_{m,\ell}(s)$ satisfies
\begin{equation*}
F_{m,\ell}(s+n\, T_{m,\ell}) = F_{m,\ell}(s) + n\, F_{m,\ell}(T_{m,\ell}), \quad \text{for all } n \in \Z.
\end{equation*}
Thus, $F_{m,\ell}$ is an increasing diffeomorphism from $\R$ to $\R$. So $(s, t) \mapsto z = u(t) + i\, v(s)$, where $u(t) = t$ and $v(s) = F_{m,\ell}(s)$, defines a diffeomorphism between $\R^2$ and $\C$. Using the conformal parameter $z = u + i v$, the metric $\dif\sigma_{m,\ell}^2$ on $\C$ can be expressed as $e^{-2y}|\dif z|^2$, where $y:\C \to \R$ is a smooth function defined by $y(u + i\, v) = -\log f_{m,\ell}(s(v))$. The Gaussian curvature of $e^{-2y}|\dif z|^2$ is given by $K = 4 e^{2y}\,y_{z\bar{z}}$, and since $f_{m,\ell}$ satisfies $f'' = m\, f^{1-a} - c\, f$, we compute
\[
e^{-a\,y}(K - c) = -m.
\]
Moreover, a direct computation shows that $y(v)$ is a solution of
\[
\ddot{y} = c\, e^{-2y} - m\, e^{(a-2)y},
\]
where the dot denotes differentiation with respect to $v$. In particular, for $m= c$, we recover the Delaunay-type metrics.
\end{remark}

\section{The spherical Ricci tori}\label{sec:spherical-ricci}
From this section onward, we will focus on the case of spherical Ricci tori that possess rotational symmetry. Recall that, fixed $c>0$, a Riemannian surface \((\Sigma, \dif\sigma^2)\) is referred to as a \emph{$c$-spherical Ricci surface} if its Gaussian curvature $K$ satisfies
\begin{equation*}
(K - c)\,\Delta K - |\nabla K|^2 - 4K\,(K - c)^2 = 0.
\end{equation*}
In this context, the equation above is called the \emph{$c$-spherical Ricci condition}, and the metric \(\dif\sigma^2\) is said to be a \emph{$c$-spherical Ricci metric}.

Since every $c$-spherical Ricci surface is a generalized Ricci surface of type \((4,0,c)\) with \(c > 0\), it follows from Proposition~\ref{prop:l} that, for fixed \(c > 0\), there exists a two-parameter family \(\{\dif\sigma_{m,\ell}^2\}\) of $c$-spherical Ricci metrics, given by
\[
\dif\sigma^2_{m,\ell} = \dif s^2 + f_{m,\ell}(s)^2 \dif t^2,
\]
where \(f_{m,\ell}\) is the positive \(T_{m,\ell}\)-periodic solution of the equation \(f'' = m\, f^{-3} - c\, f\), with energy \(\ell\). Equivalently, \(f_{m,\ell}\) satisfies the first-order differential equation
\begin{equation}\label{EDO}
(f')^2 + c\, f^2 + m\, f^{-2} = 2\ell.
\end{equation}
In this setting, \(m\) must be positive since \(c > 0\), and \(\ell > \sqrt{c\,m}\) by Proposition~\ref{prop:l}, ensuring that \(\ell^2 - c\,m > 0\).

\subsection{An explicit family of solutions}
Fixed \(c > 0\), we define
\[
\Lambda_c = \{(m,\ell) \in \mathbb{R}^2 : m>0 \ \text{and} \ \ell > \sqrt{c\, m}\}.
\]

\begin{proposition}\label{prop:period}
Let \(c > 0\) and \((m,\ell) \in \Lambda_c\). If \(f\) is a periodic solution of equation  \eqref{EDO}, then its minimal period \(T_{m,\ell}\) is constant and equal to $\pi/\sqrt{c}$.
\end{proposition}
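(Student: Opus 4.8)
The plan is to evaluate the period integral \eqref{period} explicitly in the case $a = 4$ and show that all parameter dependence cancels. Specializing the potential \eqref{potential} gives
\[
P_m(x) = \frac{1}{2}\left(c\,x^2 + m\,x^{-2}\right),
\]
so that $2\big(\ell - P_m(x)\big) = 2\ell - c\,x^2 - m\,x^{-2}$. Since Proposition~\ref{prop:l} guarantees that the relevant orbit is a simple closed curve oscillating between the two turning points $x_{m,\ell}^{\pm}$, the quantity \eqref{period} is indeed the minimal period, and using $\sqrt{\ell - P_m(x)} = \tfrac{1}{\sqrt 2}\sqrt{2\ell - c\,x^2 - m\,x^{-2}}$ it reads
\[
T_{m,\ell} = 2\int_{x_{m,\ell}^-}^{x_{m,\ell}^+}\frac{\dif x}{\sqrt{2\ell - c\,x^2 - m\,x^{-2}}}.
\]

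The decisive step is the substitution $u = x^2$, which clears the $x^{-2}$ term. Writing $\dif x = \dif u/(2\sqrt u)$ and absorbing the factor $\sqrt u$ into the radical via $\sqrt u\,\sqrt{2\ell - c\,u - m/u} = \sqrt{-c\,u^2 + 2\ell\,u - m}$, the integral collapses to
\[
T_{m,\ell} = \int_{u^-}^{u^+}\frac{\dif u}{\sqrt{-c\,u^2 + 2\ell\,u - m}},
\]
where $u^{\pm} = (x_{m,\ell}^{\pm})^2$ are precisely the roots of $c\,u^2 - 2\ell\,u + m = 0$, namely $u^{\pm} = \big(\ell \pm \sqrt{\ell^2 - c\,m}\big)/c$. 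The hypothesis $(m,\ell)\in\Lambda_c$ ensures $\ell^2 - c\,m > 0$ with $u^- < u^+$ both positive, so the quadratic is positive on $(u^-, u^+)$ and vanishes exactly at the transformed turning points.

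It then remains to compute a standard integral: factoring $-c\,u^2 + 2\ell\,u - m = c\,(u - u^-)(u^+ - u)$ yields
\[
T_{m,\ell} = \frac{1}{\sqrt c}\int_{u^-}^{u^+}\frac{\dif u}{\sqrt{(u - u^-)(u^+ - u)}},
\]
and the substitution $u = \tfrac{u^+ + u^-}{2} + \tfrac{u^+ - u^-}{2}\sin\theta$ turns this into $\int_{-\pi/2}^{\pi/2}\dif\theta = \pi$. Hence $T_{m,\ell} = \pi/\sqrt c$, with no residual dependence on $m$ or $\ell$.

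I do not expect a genuine obstacle here: the argument is a direct computation once one sees the substitution $u = x^2$. The only points demanding care are purely bookkeeping — checking that the factor $\sqrt 2$ in \eqref{period} combines correctly with the Jacobian $1/(2\sqrt u)$ so that no spurious constant survives, and confirming that the transformed quadratic has the correct sign on the whole interval. The conceptual reason the period is constant is that $u = x^2$ converts the anharmonic potential into an exact quadratic in $u$, whose oscillation period depends only on the coefficient $c$ of $u^2$ and neither on the energy $\ell$ nor on $m$.
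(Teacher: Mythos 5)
Your proof is correct and follows essentially the same route as the paper: both evaluate the period integral \eqref{period} explicitly by a quadratic substitution (you use $u = x^2$ and then center; the paper's proof uses $u = c\,x^2 - \ell$ in a single step), reducing it to a standard integral equal to $\pi$ and hence $T_{m,\ell} = \pi/\sqrt{c}$. The bookkeeping you flag — the $\sqrt{2}$ from \eqref{period} against the Jacobian, the turning points $u^{\pm} = \bigl(\ell \pm \sqrt{\ell^2 - c\,m}\bigr)/c$, and the sign of the quadratic on $(u^-, u^+)$ — all checks out.
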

\begin{proof}
Since \(P_m(x) = (m\, x^{-2} + c\, x^2)/2\), introducing the variable \(u = c\, x^2 - \ell\), from \eqref{period} we can compute the minimal period \(T_{m,\ell}\)  as follows:
\[
T_{m,\ell} = 2 \int_{x_{m,\ell}^-}^{x_{m,\ell}^+} \frac{x\, \dif x}{\sqrt{2\ell\, x^2 - c x^4 - \ell}}
= \frac{1}{\sqrt{c}} \int_{-\sqrt{\ell^2 - c\,m}}^{\sqrt{\ell^2 - c\,m}} \frac{\dif u}{\sqrt{\ell^2 - c\,m - u^2}}
= \frac{\pi}{\sqrt{c}},
\]
where
\[
x_{m,\ell}^{\pm} = \sqrt{\frac{\ell \pm \sqrt{\ell^2 - c\,m}}{c}}.
\]
\end{proof}
In the sequel, for elements of \(\Lambda_c\) we will present an explicit family of periodic solutions of \eqref{EDO}.
\begin{lemma}\label{lemma:f}
Let \(c > 0\) and \((m, \ell) \in \Lambda_c\). Then
\[
f(s) = \sqrt{\frac{\ell + \sqrt{\ell^2 - c\, m}\, \sin(2\sqrt{c}\,s + c_1)}{c}}, \qquad c_1 \in \mathbb{R},
\]
is a positive, \(\tfrac{\pi}{\sqrt{c}}\)-periodic solution of equation\eqref{EDO}, well-defined on \(\mathbb{R}\). Furthermore, we have that
\[
0 < \frac{\ell - \sqrt{\ell^2 - c\, m}}{c} \leq f(s)^2 \leq \frac{\ell + \sqrt{\ell^2 - c\, m}}{c}, \quad \text{for all } s \in \mathbb{R}.
\]
\end{lemma}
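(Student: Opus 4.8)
The plan is to verify the claim by direct substitution, organizing the computation around the squared function so that the first-order equation becomes a single rational identity. Set $A := \sqrt{\ell^2 - c\,m}$ and $\theta := 2\sqrt{c}\,s + c_1$, so that the candidate satisfies $c\,f(s)^2 = \ell + A\sin\theta =: D$, and every term can be written through $D$ and $\cos\theta$. Equivalently, one could derive the formula rather than guess it: writing $g = f^2$, equation \eqref{EDO} transforms into $(g')^2 = \tfrac{4(\ell^2 - c\,m)}{c} - 4c\bigl(g - \tfrac{\ell}{c}\bigr)^2$, a harmonic-oscillator equation whose periodic solutions are exactly $g = \tfrac{\ell}{c} + \tfrac{A}{c}\sin\theta$. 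I would keep this derivation in mind as the structural reason behind the stated $f$, but present only the shorter verification.

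Differentiating $c\,f^2 = D$ gives $f f' = \tfrac{A\cos\theta}{\sqrt{c}}$, whence $(f')^2 = (f f')^2/f^2 = \tfrac{A^2\cos^2\theta}{D}$. Substituting into the left-hand side of \eqref{EDO} and placing everything over $D$,
\[
(f')^2 + c\,f^2 + m\,f^{-2} = \frac{A^2\cos^2\theta + D^2 + c\,m}{D}.
\]
Expanding $D^2 = \ell^2 + 2\ell A\sin\theta + A^2\sin^2\theta$ and using $A^2\cos^2\theta + A^2\sin^2\theta = A^2$ together with the defining identity $A^2 + c\,m = \ell^2$, the numerator collapses to $2\ell^2 + 2\ell A\sin\theta = 2\ell D$, so the whole expression equals $2\ell$. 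This confirms that $f$ solves \eqref{EDO}.

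It remains to record positivity, the bounds, and the period. The constraints defining $\Lambda_c$ — namely $m > 0$ and $\ell > \sqrt{c\,m}$ with $c > 0$ — force $0 \leq A < \ell$, since $A < \ell$ is equivalent to $\ell^2 - c\,m < \ell^2$, i.e.\ to $c\,m > 0$. Hence $D = \ell + A\sin\theta \geq \ell - A > 0$ for every $s$, so $f$ is real, positive, and smooth on all of $\R$, and dividing by $c$ yields the two-sided estimate $\tfrac{\ell - A}{c} \leq f^2 \leq \tfrac{\ell + A}{c}$ from $-1 \leq \sin\theta \leq 1$. Finally, $\sin\theta$ has period $\tfrac{2\pi}{2\sqrt{c}} = \tfrac{\pi}{\sqrt{c}}$, so $f^2$ — and therefore its positive square root $f$ — is $\tfrac{\pi}{\sqrt{c}}$-periodic, in agreement with Proposition~\ref{prop:period}.

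The argument is essentially a computation, so there is no serious obstacle; the only point requiring care is the positivity of the radicand, i.e.\ the strict inequality $\ell - A > 0$, which is exactly where the hypothesis $(m,\ell) \in \Lambda_c$ enters and guarantees that $f$ is globally well-defined rather than merely locally.
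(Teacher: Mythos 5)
Your proposal is correct and takes essentially the same approach as the paper: both work with $g = f^2$, under which \eqref{EDO} becomes the harmonic-oscillator relation $(g')^2 = 4c\bigl[\tfrac{\ell^2 - c\,m}{c^2} - \bigl(g - \tfrac{\ell}{c}\bigr)^2\bigr]$, and both use $\sqrt{c\,m} < \ell$ to get positivity and the two-sided bound. The only difference is presentational — the paper integrates this equation (via an arcsine) to \emph{derive} the formula, while you \emph{verify} it by substitution — but since the lemma only asserts that the stated $f$ is a positive $\tfrac{\pi}{\sqrt{c}}$-periodic solution satisfying the bounds, your verification is fully sufficient.
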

\begin{proof}
We set \(g(s) = f(s)^2\). For \(c > 0\) and \((m, \ell) \in \Lambda\), by equation \eqref{EDO}, we obtain
\[
g'(s)^2 = -4c\, g^2 + 8\ell\, g - 4m = 4c\, \bigg[\frac{\ell^2 - c\, m}{c^2} - \left(g(s) - \frac{\ell}{c}\right)^2\bigg].
\]
By integrating the equation above, we obtain
\[
\arcsin\left(\frac{c\, g(s) - \ell}{\sqrt{\ell^2 - c\, m}}\right) = \pm 2\sqrt{c}\,s + c_1, \qquad c_1 \in \mathbb{R}.
\]
Since \(g(s) = f(s)^2\), and the sign in the sine argument can be absorbed into the constant \(c_1\), we find that
\[
f(s) = \sqrt{\frac{\ell + \sqrt{\ell^2 - c\, m}\, \sin(2\sqrt{c}\,s + c_1)}{c}}, \qquad c_1 \in \mathbb{R},
\]
solves the differential equation \eqref{EDO}. Thus, \(f(s)\) is a positive periodic function with period \(T = \pi/\sqrt{c}\), well-defined on \(\mathbb{R}\).
To establish bounds for \(f(s)^2\), we observe that it reaches its maximum value
$$\frac{\ell + \sqrt{\ell^2 - c\, m}}{c},\quad \text{when}\quad s = \frac{T}{4}+k\,T,$$ and its minimum value $$\frac{\ell - \sqrt{\ell^2 - c\, m}}{c},\quad \text{when}\quad s = \frac{3T}{4}+k\,T,$$ with $k\in\Z$. Moreover, the lower bound is positive since \(\sqrt{c\, m} < \ell\). This completes the proof.
\end{proof}

\begin{remark}
These explicit solutions depend continuously on the parameters \((m,\ell) \in \Lambda_c\). The constant solution is given by \(f = (m/c)^{1/4} = (\ell/c)^{1/2}\), corresponding to the degenerate case \(\ell^2 = c\,m\). Moreover, for \(m=0\) and \(\ell > 0\), the associated solution \(f\) is continuous but exhibits an infinite number of cusp points, making it non-smooth on \(\mathbb{R}\).
\end{remark}

To study the two-parameter family of metrics \(\{\dif\sigma_{m,\ell}^2\}\), we use the explicit solutions provided by Proposition~\ref{lemma:f} and we assume $c_1 = 0$. Fixing \(c > 0\), we consider the metric  given by
\[
\dif\sigma_{m,\ell}^2 = \dif s^2 + f_{m,\ell}(s)^2 \dif t^2, \qquad  (m, \ell) \in \Lambda_c,
\]
 and
\[
f_{m,\ell}(s) = \sqrt{\frac{\ell + \sqrt{\ell^2 - c\, m}\, \sin(2\sqrt{c}\,s)}{c}}.
\]

Although the metric \(\dif\sigma_{m,\ell}^2\) also depends on \(c\), we omit this dependence from the notation for simplicity. Denoting by \(K_{m,\ell}\) the Gaussian curvature of \(\dif\sigma_{m,\ell}^2\), we observe from Lemma~\ref{lemma:f} that
\begin{equation}\label{bound-of-K}
  K_{m,\ell}(s) - c = -m \, f_{m,\ell}(s)^4,
\end{equation}
which implies that \(K_{m,\ell}(s) < c\).

\begin{remark}
We observe two limiting cases. The curvature satisfies \(K_{m,\ell}(s) = c\) if and only if \(m = 0\), and \(K_{m,\ell}(s) = 0\) if and only if \(\ell^2 = c\, m\). In the latter case, \(f_{m,\ell}(s)\) is the constant solution given by \(f_{m,\ell}(s) \equiv (m/c)^{1/4}\).
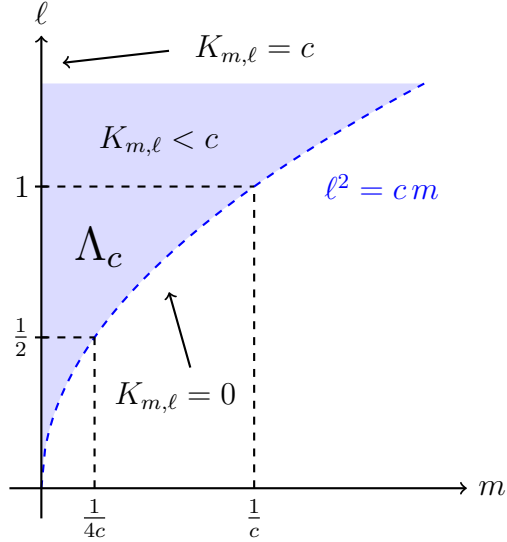
\begin{figure}[ht]
    \centering
\begin{tikzpicture}[yscale=4, xscale=2.8, thick]
  \fill[blue!20, opacity=0.7]
    (0,sqrt{1.8}) -- plot[domain=0:1.5, samples=100] (\x, {sqrt(\x)}) -- (1.8,sqrt{1.8}) -- cycle;

  \draw[thick, blue, dashed, domain=0:1.8, samples=100]
    plot (\x, {sqrt(\x)});

  \draw[thick,dashed] (0,1) -- (1,1) ;
  \draw[thick,dashed] (1,0) -- (1,1) ;
  \draw[thick,dashed] (1/4,0) -- (1/4,1/2) ;
  \draw[thick,dashed] (0,1/2) -- (1/4,1/2) ;

  \draw[->] (-0.15,0) -- (2,0) node[right] {$m$};
  \draw[->] (0,-0.1) -- (0,1.5) node[above] {$\ell$};

  \node at (1,0) [below] {$\frac{1}{c}$};
    \node at (1/4,0) [below] {$\frac{1}{4c}$};
  \node at (0.55, 1.15) {$K_{m,\ell} < c$};
   \node at (1, 1.45) {$K_{m,\ell} = c$};
    \node at (0.63, 0.3) {$K_{m,\ell} = 0$};
    \node at (1.6, 1) {\textcolor{blue}{$\ell^2 = c\,m$}};
   \node at (0.27, 0.8) {\scalebox{1.5}{$\Lambda_c$}};;

   \foreach \y/\label in {0.5/{\tfrac{1}{2}}, 1/1} {
    \draw (-0.03,\y) -- (0.03,\y);  
    \node[left] at (0,\y) {$\label$};  
  }

  \draw[->, thick] (0.6,1.45) -- (0.1,1.41);
   \draw[->, thick] (0.7,0.4) -- (0.6,0.65);
\end{tikzpicture}
    \caption{The set $\Lambda_c$ of parameters.}
    \label{fig:lambda-set}
\end{figure}

\end{remark}

As a summary of this discussion, we conclude this section with one of the main results of this work. Given \((\gamma_1, \gamma_2) \in \mathbb{R} \times \mathbb{R}^*\), we establish the existence of a two-parameter family of $c$-spherical Ricci metrics that can be endowed on the torus \(\mathbb{R}^2 / \Gamma_{\frac{\pi}{\sqrt{c}}}(\gamma_1, \gamma_2)\), with
\[
\Gamma_{\frac{\pi}{\sqrt{c}}}(\gamma_1, \gamma_2) = \left(\frac{\pi}{\sqrt{c}}, \gamma_1\right)\mathbb{Z} \oplus (0 , \gamma_2)\mathbb{Z},
\]
where \( (\gamma_1, \gamma_2) \in \mathbb{R} \times \mathbb{R}^* \).

\begin{theorem}\label{thm:ricci-tori}
Let \(c > 0\) and \((\gamma_1, \gamma_2) \in \mathbb{R} \times \mathbb{R}^*\). The metrics \(\{\dif \sigma_{m,\ell}^2\}\), with \((m, \ell) \in \Lambda_c\), form a two-parameter family of rotationally invariant $c$-spherical Ricci metrics on the torus \(\mathbb{R}^2 / \Gamma_{\frac{\pi}{\sqrt{c}}}(\gamma_1, \gamma_2)\). Furthermore, there exist infinitely many metrics in this family that are not isometric.
\end{theorem}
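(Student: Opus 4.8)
The plan is to treat the two assertions separately, since the first is essentially bookkeeping and all the content lies in the second. For the first assertion, Proposition~\ref{prop:f} (specialized to $a=4$) already shows that each $\dif\sigma_{m,\ell}^2$ is a generalized Ricci metric of type $(4,0,c)$, i.e. a $c$-spherical Ricci metric, and Lemma~\ref{lemma:f} provides the explicit positive solution $f_{m,\ell}$ realizing it, while the warped-product form $\dif s^2+f_{m,\ell}(s)^2\dif t^2$ makes the rotational invariance manifest. The crucial point I would stress is Proposition~\ref{prop:period}: the minimal period equals $\pi/\sqrt{c}$ for \emph{every} $(m,\ell)\in\Lambda_c$. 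Hence the lattice $\Gamma_{\pi/\sqrt c}(\gamma_1,\gamma_2)$ is literally one and the same for all parameters, and each $\pi/\sqrt c$-periodic $f_{m,\ell}$ lets $\dif\sigma_{m,\ell}^2$ descend to the \emph{fixed} torus $\mathbb{R}^2/\Gamma_{\pi/\sqrt c}(\gamma_1,\gamma_2)$. I would dispatch this in one short paragraph citing these three results.

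For the second assertion I would exhibit a genuine isometry invariant and show it separates the metrics. Any isometry preserves the Gaussian curvature as a function, hence preserves its image $K_{m,\ell}(M)\subset\mathbb{R}$. Since $K_{m,\ell}(s)=c-m\,f_{m,\ell}(s)^4$ by \eqref{bound-of-K}, and $f_{m,\ell}^2$ sweeps continuously through $\big[\tfrac{\ell-D}{c},\tfrac{\ell+D}{c}\big]$ with $D:=\sqrt{\ell^2-cm}$ by Lemma~\ref{lemma:f}, this image is exactly the closed interval $[K_{\min},K_{\max}]$, where
\[
K_{\max}=c-\frac{m(\ell-D)^2}{c^2}, \qquad K_{\min}=c-\frac{m(\ell+D)^2}{c^2}.
\]
Thus the ordered pair $(K_{\min},K_{\max})$ is an isometry invariant of $\dif\sigma_{m,\ell}^2$, and any two metrics with distinct curvature ranges are automatically non-isometric.

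The final step is to confirm these invariants take infinitely many values. Writing $x_\pm^2=(\ell\pm D)/c$, one has $x_+^2x_-^2=m/c$, which yields the two relations $(c-K_{\min})(c-K_{\max})=m^4/c^2$ and $(c-K_{\min})/(c-K_{\max})=\big((\ell+D)/(\ell-D)\big)^2>1$. Using $0<D<\ell$, valid precisely because $(m,\ell)\in\Lambda_c$, the first relation recovers $m$ and then the second recovers $\ell$, so the map $(m,\ell)\mapsto(K_{\min},K_{\max})$ is injective on $\Lambda_c$. Consequently distinct parameters force distinct curvature ranges, and in fact the \emph{entire} family is pairwise non-isometric; in particular there are infinitely many non-isometric examples. (A lighter argument suffices for the stated claim: fixing $m$ and varying $\ell$ already makes $K_{\min}$ strictly monotone, producing infinitely many distinct values.) I expect the only real obstacle to be the algebraic bookkeeping behind the injectivity—keeping track of the two positive roots $x_\pm$ and checking that the inversion is well defined—everything else being a routine consequence of \eqref{bound-of-K} and Lemma~\ref{lemma:f}.
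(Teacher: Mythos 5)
Your overall strategy coincides with the paper's: the first assertion is dispatched by combining Proposition~\ref{prop:f}, Proposition~\ref{prop:period} and Lemma~\ref{lemma:f} (the key point being that the minimal period $\pi/\sqrt{c}$ is independent of $(m,\ell)$, so every metric descends to the \emph{same} torus), and the second assertion is attacked by using the range of the Gaussian curvature as an isometry invariant via the Theorema Egregium. Both steps match the paper's proof.

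There is, however, a genuine error in your injectivity claim, and it comes from taking \eqref{bound-of-K} at face value: as printed, that equation contains a typo. Since $K=-f''/f$ and $f_{m,\ell}''=m\,f_{m,\ell}^{-3}-c\,f_{m,\ell}$, the correct relation is $K_{m,\ell}-c=-m\,f_{m,\ell}^{-4}$ (negative power; the paper's own proof uses this version, which is why its constants $L_1,L_2$ carry the factors $(\ell\pm\sqrt{\ell^2-c\,m})^{2}$ in the \emph{denominator}). With the correct relation, writing $D=\sqrt{\ell^2-c\,m}$, the quantity $c-K_{m,\ell}$ ranges over the interval $\bigl[\,m c^2/(\ell+D)^2,\; m c^2/(\ell-D)^2\,\bigr]$, and since $(\ell+D)(\ell-D)=c\,m$, the product of the two endpoints is identically $c^2$: it recovers nothing about $(m,\ell)$. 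Only the ratio $\bigl((\ell+D)/(\ell-D)\bigr)^2$ survives, and it determines only $\ell/\sqrt{m}$ --- precisely the relation $\sqrt{m}\,\ell_*=\sqrt{m_*}\,\ell$ recorded in the paper's proof. So the map $(m,\ell)\mapsto(K_{\min},K_{\max})$ is \emph{not} injective on $\Lambda_c$: any two pairs with $\ell/\sqrt{m}=\ell_*/\sqrt{m_*}$ produce identical curvature ranges, and your conclusion that the entire family is pairwise non-isometric is unsupported (the paper does not claim it either). Your ``first relation'' $(c-K_{\min})(c-K_{\max})=m^4/c^2$ is an artifact of the misprinted power of $f$.

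The theorem itself only asks for infinitely many non-isometric metrics, and your parenthetical ``lighter argument'' delivers exactly this, and is exactly what the paper does: fix $m=m_0$ and vary $\ell$ (or fix $\ell=\ell_0$ and vary $m$); then the endpoint $m c^2/(\ell-D)^2$ of the curvature range is strictly monotone along the slice, so the metrics in that one-parameter subfamily are pairwise non-isometric. If you replace the false injectivity step by this argument, run with the corrected formula $c-K_{m,\ell}=m\,f_{m,\ell}^{-4}$, your proof is complete and agrees with the paper's.
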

\begin{proof}
The first part of the theorem follows directly from the preceding results and from the fact that equation~\eqref{EDO} is equivalent to the generalized Ricci condition of type \((4,0,c)\). Indeed, by Proposition~\ref{prop:l}, the level sets \(\{(x,y) : E_m(x,y) = \ell\}\) are compact smooth curves surrounding the equilibrium point. As a consequence, periodic solutions exist; their minimal periods are independent of both \(m\) and \(\ell\) (Proposition~\ref{prop:period}) and are explicitly described in Lemma~\ref{lemma:f}, by setting \(c_1 = 0\). Therefore, the two-parameter family \(\{\dif\sigma_{m,\ell}^2\}\) defines a set of metrics that can be endowed on the torus \(\mathbb{R}^2 / \Gamma_{\frac{\pi}{\sqrt{c}}}(\gamma_1, \gamma_2)\), for any \((\gamma_1, \gamma_2) \in \mathbb{R} \times \mathbb{R}^*\).

Additionally, from relation \eqref{bound-of-K}, we observe that the range of \((c - K_{m,\ell})\) is the closed interval \([L_1, L_2]\), where \(L_1\) and \(L_2\) are positive constants given by
\[
L_1 = \frac{m\, c^2}{(\ell+\sqrt{\ell^2 - c\, m})^2} \qquad \text{and} \qquad L_2 = \frac{m\, c^2}{(\ell - \sqrt{\ell^2 - c\, m})^2}.
\]
Let \((m, \ell), (m_*, \ell_*) \in \Lambda_c\) be such that \(\dif\sigma_{m, \ell}^2\) and \(\dif\sigma_{m_*, \ell_*}^2\) are isometric. By the Gauss Egregium Theorem, it follows that \(L_1 = L_1^*\) and \(L_2 = L_2^*\). Also, a straightforward computation yields \(\sqrt{m}\, \ell_* = \sqrt{m_*}\, \ell\).

Therefore, for fixed values \(m_0 > 0\) and \(\ell_0 > 0\), the families \(\{\dif \sigma_{m_0,\ell}^2\}\) and \(\{\dif \sigma_{m,\ell_0}^2\}\), with \((m_0, \ell), (m, \ell_0)\in \Lambda_c\), define one-parameter families of non-isometric metrics on the torus \(\mathbb{R}^2 / \Gamma_{\frac{\pi}{\sqrt{c}}}(\gamma_1, \gamma_2)\).

\end{proof}

\begin{corollary}
Every torus \(\mathbb{S}^1(r_1) \times \mathbb{S}^1(r_2)\) admits a \(\tilde{c}\)-spherical Ricci metric, for any \(\tilde{c} > 0\).
\end{corollary}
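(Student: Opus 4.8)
The plan is to combine Theorem~\ref{thm:ricci-tori} with the homothety property of generalized Ricci metrics recorded in Section~\ref{sec:preliminaries}. The first thing to notice is that Theorem~\ref{thm:ricci-tori} produces $c$-spherical Ricci metrics only on the quotient $\R^2/\Gamma_{\pi/\sqrt c}(\gamma_1,\gamma_2)$, whose $s$-period is rigidly fixed at $\pi/\sqrt c$ by Proposition~\ref{prop:period}. Taking $\gamma_1=0$ makes the lattice rectangular, namely $(\pi/\sqrt c,0)\Z\oplus(0,\gamma_2)\Z$, so the resulting torus is already a product of circles, with the first radius tied to $c$ and the second freely prescribed through $\gamma_2$.

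This rigidity is the main obstacle: one cannot match an arbitrary pair $(r_1,\tilde c)$ simply by choosing parameters in Theorem~\ref{thm:ricci-tori}, since forcing the first factor to be $\mathbb{S}^1(r_1)$ pins the curvature constant to $1/(4r_1^2)$. I would decouple the two requirements. First, set the auxiliary value $c_0=1/(4r_1^2)$, so that $\pi/\sqrt{c_0}=2\pi r_1$, and choose $\gamma_1=0$, $\gamma_2=2\pi r_2$. Then $\Gamma_{\pi/\sqrt{c_0}}(0,2\pi r_2)=2\pi r_1\Z\oplus 2\pi r_2\Z$, so under the paper's convention $\mathbb{S}^1(r)\cong\R/2\pi r\Z$ the quotient $\R^2/\Gamma_{\pi/\sqrt{c_0}}(0,2\pi r_2)$ is literally $\mathbb{S}^1(r_1)\times\mathbb{S}^1(r_2)$, with no identification to check. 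By Theorem~\ref{thm:ricci-tori}, picking any $(m,\ell)\in\Lambda_{c_0}$ (which is nonempty, e.g.\ $m=1$, $\ell=\sqrt{c_0}+1$) endows this torus with a $c_0$-spherical Ricci metric $\dif\sigma_{m,\ell}^2$.

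The final step converts the curvature constant from $c_0$ to the desired $\tilde c$ without disturbing the underlying torus. Since $c$-spherical Ricci metrics are precisely the generalized Ricci metrics of type $(4,0,c)$, the homothety rule recorded in Section~\ref{sec:preliminaries} gives that $\eta\,\dif\sigma_{m,\ell}^2$ has type $(4,0,c_0/\eta^2)$ for every $\eta>0$. Taking $\eta=\sqrt{c_0/\tilde c}=1/(2r_1\sqrt{\tilde c})$ yields type $(4,0,\tilde c)$, that is, a $\tilde c$-spherical Ricci metric. As multiplying a metric by a positive constant changes neither the smooth manifold nor its lattice, this metric still lives on the same torus $\mathbb{S}^1(r_1)\times\mathbb{S}^1(r_2)$, which proves the corollary. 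I expect no genuine difficulty beyond keeping track of the two roles played by the curvature constant — once as the geometric parameter of the family in Theorem~\ref{thm:ricci-tori}, and once as the quantity rescaled by the homothety.
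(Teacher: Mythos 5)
Your proof is correct and takes essentially the same route as the paper: both set $c_0 = 1/(4r_1^2)$, identify $\mathbb{S}^1(r_1)\times\mathbb{S}^1(r_2)$ with $\mathbb{R}^2/\Gamma_{2\pi r_1}(0,2\pi r_2)$, invoke Theorem~\ref{thm:ricci-tori}, and then rescale by the homothety factor $\eta = 1/(2r_1\sqrt{\tilde c})$ via the scaling rule for generalized Ricci metrics from Section~\ref{sec:preliminaries}. The only difference is presentational — you make explicit the nonemptiness of $\Lambda_{c_0}$ and the fact that rescaling does not change the underlying torus, both of which the paper leaves implicit.
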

\begin{proof}
Set \(c = 1/4r_1^2\) and \((\gamma_1, \gamma_2) = (0, 2\pi r_2)\). Since \(\mathbb{S}^1(r_1) \times \mathbb{S}^1(r_2) \cong \mathbb{R}^2 / \Gamma_{2\pi r_1}(0, 2\pi r_2)\), Theorem~\ref{thm:ricci-tori} guarantees that \(\dif\sigma_{m, \ell}^2\) is a \(c\)-spherical Ricci metric on the torus for any \((m,\ell) \in \Lambda_c\). Consequently, the rescaled metric
\[
\frac{1}{2 r_1 \sqrt{\tilde{c}}} \, \dif\sigma_{m, \ell}^2
\]
is a \(\tilde{c}\)-spherical Ricci metric on \(\mathbb{S}^1(r_1) \times \mathbb{S}^1(r_2)\).
\end{proof}

\begin{remark}
    Let \((m,\ell)\in\Lambda_c\) be such that \(4c\, m = (1 - 2\ell)^2\). Setting \(B^2 = 4\ell - 1\) and \(H^2 = c\), we find that
    \(\dif\sigma_{m, \ell}^2\) can be written as
    \[
        \dif s^2 + \frac{\sqrt{1 + B^2 + 2B \sin(2Hs)}}{2|H|} \, \dif t^2,
    \]
    that is, the family \(\{\dif\sigma_{m, \ell}^2\}\) consists of the metrics induced by Delaunay surfaces in \(\mathbb{R}^3\) when \(4c\, m = (1 - 2\ell)^2\) (see, for example, \cite{Kenmotsu-2003}).
\end{remark}

\section{Spherical Ricci tori immersed in the \texorpdfstring{$3$}{3}-sphere}\label{sec:immersed-tori}

The goal of this section is to realize the spherical Ricci tori constructed in the previous section as rotationally invariant surfaces in the sphere \(\mathbb{S}^3\). Besides Lawson's results, which guarantee a minimal local isometric immersion of any Ricci metric of type \((4,0,c)\) into a simply connected three-dimensional space form of constant curvature \( c \), it is worth noting that, for \((c, m, 2\ell) = (1 + H^2,1/D^2, 1 - 2H/D)\), the equation \eqref{EDO}
\begin{equation*}
(f')^2 + c\, f^2 + m\, f^{-2} = 2\,\ell
\end{equation*}
plays a significant role in the construction of CMC-immersions  in \(\mathbb{S}^3\). This was first observed by Otsuki in \cite{Otsuki1970} for \( H = 0 \) and, later, extended by Wei in \cite{Wei2006} for \( H \neq 0 \). In particular, this equation establishes a one-to-one correspondence between CMC rotational surfaces in \(\mathbb{S}^3\) and positive solutions of the differential equation (we refer to \cite{Perdomo-2016} for a detailed exposition).

For this, we fix $c>0$ and let $\S^3_c$ be the $3$-dimensional sphere of sectional curvature $c$. For simplicity, we sometimes use the normalization $c = 1$. In this case, we denote $\S^3 = \S^3_1$. Also, we identify $\S^3_c$ with the sphere of radius $1/\sqrt{c}$ in $\R^4$, i.e.,
\[\S^3_c = \Big\{(x,y,z,w)\in\R^4 \ : \ x^2+y^2+z^2+w^2=\frac{1}{c}\Big\},\]
endowed with the induced metric from $\R^4$.
\subsection{Rotational surfaces}\label{rotational-surfaces} Given a geodesic $\gamma\subset\S^3_c$, let $\{\phi_t\}_{t\in\R}$ be the one-parameter subgroup of isometries of $\S^3_c$ that fixes $\gamma$. Without loss of generality, we may suppose that $\gamma$ is the great circle $\{(x,y,0,0)\in \S^3_c\}$, and so
\[\phi_t(x,y,z,w) = (x,y,z\cos t-w\sin t,z\sin t+w\cos t), \ \ t\in\S^1,\]
consists of rotations of angle $t$ around $\gamma$, whose the orbits are circles centered at $\gamma$. Thus, a surface in $\S^3_c$ is said {\it rotationally symmetric} with respect to the geodesic $\gamma$ in $\S^3_c$, also referred to as a {\it rotational surface}, if it is invariant under the action of the one-parameter subgroup of isometries $\{\phi_t\}$. A such surface may be locally parameterized by
\[X(s,t) = \phi_t(\alpha(s)),\]
where $\alpha:I\subset\R\to (\S^2_c)_+$ is a regular curve in the upper hemisphere  of a totally geodesic sphere $\S^2_c$ of $\S^3_c$ containing the geodesic $\gamma$. The curve $\alpha$ is called the {\it profile curve} of the parametrization $X$.

Without loss of generality, let $\alpha:I\subset\R\to (\S^2_c)_+$ be a regular curve parameterized by its arc length and defined on an open interval $I\subset\R$ containing $0$. We fix $(\S^2_c)_+ = \{(x,y,z,0)\in\S^3_c : z>0 \}$, and so $\alpha(s) = (x(s),y(s),z(s),0)$ where $z(s)$ is a positive function. Firstly, observe that $\alpha$ is entirely determined by $z(s)$, up to an integration. Indeed, we can write $x(s)$ and $y(s)$ as
$$\left\{\begin{aligned}
  x(s) &= \frac{1}{\sqrt{c}}\sqrt{1-c\, z(s)^2}\,\cos \theta(s), \\
  y(s) &= \frac{1}{\sqrt{c}}\sqrt{1-c\, z(s)^2}\,\sin \theta(s),
\end{aligned}
\right.
$$
for some function $\theta:I\to\R$. Also, since $\alpha$ is parameterized by its arc length we get that $\theta(s)$ satisfies the following ordinary differential equation:
\begin{equation*}
  \theta'(s)^2 = \frac{c-c^2\, z(s)^2-c\, z'(s)^2}{[1-c\, z(s)^2]^2}.
\end{equation*}
Up to a translation along the geodesic $\gamma$ and a change of orientation of $\alpha$, we may suppose that $\theta(0)=0$ and $\theta'(s)\geq 0$, for all $s$. Hence, $\alpha$ may be parameterized by
\begin{equation*}
  \alpha(s) = \left(\frac{1}{\sqrt{c}}\sqrt{1-c\, z(s)^2}\,\cos \theta(s),\frac{1}{\sqrt{c}}\sqrt{1-c\, z(s)^2}\,\sin \theta(s), z(s),0\right)
\end{equation*}
where $\theta(s)$ satisfies
\begin{equation}\label{theta-function}
  \theta'(s) = \sqrt{c}\,\frac{\sqrt{1-c\, z(s)^2-z'(s)^2}}{1-c \,z(s)^2} \quad \text{and} \quad \theta(0)=0.
\end{equation}

Therefore, up to congruences, any rotational surface $\Sigma$ in $\S_c^3$ can be locally parameterized by $X:I\times\S^1\to\S_c^3$ with
\begin{equation*}\label{parameterization}
  X(s,t)
   =\Big(\sqrt{c^{-1}- z(s)^2}\, \cos \theta(s),
   \sqrt{c^{-1}- z(s)^2} \,\sin \theta(s),
   z(s)\cos t,z(s)\sin t\Big)
\end{equation*}
for some smooth function $z(s):I\to \R$ satisfying $0< z(s)< \frac{1}{\sqrt{c}}$, for which the solution $\theta(s)$ of \eqref{theta-function} is well-defined on $I$. Moreover, using the parametrization \( X(s,t) \), we can show that the induced metric on \( \Sigma \) is given by
\begin{equation*}
  \dif \sigma^2 = \dif s^2 + z(s)^2 \dif t^2.
\end{equation*}
For an appropriate choice of orientation, we compute the mean curvature \( H \) of \( X(s,t) \) as
\begin{equation}\label{mean-curvature}
  H(s) = \frac{z(s)\,z''(s) + z'(s)^2 + 2c\, z(s)^2 - 1}{2 z(s)\, \sqrt{1 - c\, z(s)^2 - z'(s)^2}}.
\end{equation}

The solutions to the minimal equation for rotational surfaces in $\S^3_c$ comprise a one-parameter family $\{z_j(s)\}_{j\in[0,1]}$, where
\begin{equation}\label{eq:sol-minimal}
  z_j(s) = \sqrt{\frac{1+j\,\sin\left(2 \sqrt{c}\, s\right)}{2c}}.
\end{equation}
In addition, Ripoll's result in \cite[Theorem A]{Ripoll89} states that any minimal rotational surface in $\S^3_c$ is congruent to one generated by some $z_j(s)$, for $j\in[0,1].$
By equation \eqref{theta-function}, the function $\theta_j(s)$ can be expressed as
\[
\theta_j(s) = \int_{0}^{s} \frac{\sqrt{2\,(1-j^2)\,c}}{(1-j\,\sin(2\sqrt{c}\,\varsigma))\sqrt{1+j\,\sin(2\sqrt{c}\,\varsigma)}} \, \dif\varsigma,
\]
and, therefore, the profile curve associated with $z_j(s)$ is given by:
\[
\alpha_j(s) = \left(\frac{1}{\sqrt{c}}\sqrt{1-c\, z_j(s)^2}\,\cos \theta_j(s),\frac{1}{\sqrt{c}}\sqrt{1-c\,z_j(s)^2}\,\sin \theta_j(s), z_j(s), 0\right).
\]
Under these considerations, the study of minimal rotational surfaces in $\S^3_c$ can be divided into three distinct cases:
\begin{enumerate}
\item For $j = 0$, we get that $z_{0}(s) = 1/\sqrt{2c}$ and $\theta_{0}(s) = \sqrt{2c}\, s$, whose are defined on $\R$. In this case, the profile curve $\alpha_{0}(s)$ is closed for $s\in[0,2\pi/\sqrt{2c}]$ and the map $X_{0}:\S^1(1/\sqrt{2c})\times\S^1\to\S^3_c$
parameterizes the Clifford torus $\S^1(1/\sqrt{2c})\times\S^1(1/\sqrt{2c})$.
\item For $j=1$, we get that $\theta_1(s)=0$ and
    \[z_1(s) =\frac{\sin\left(\sqrt{c}\,s+\frac{\pi}{4}\right)}{\sqrt{c}},\] with $I = (-\frac{\pi}{4\sqrt{c}},\frac{3\pi}{4\sqrt{c}})$. The profile curve $\alpha_1(s)$ parameterizes a piece of a great circle of $\S^3_c$. By standard arguments $\dif s^2 + z_1(s)^2\dif t^2$ extends to a smooth metric on $\Omega=[-\frac{\pi}{4\sqrt{c}},\frac{3\pi}{4\sqrt{c}}]\times\S^1$
    and we get that $X_1:\Omega\to\S^3_c$ parametrizes the totally geodesic sphere $ \{(x,0,z,w)\in\S^3_c\}$.

\item For $j \in(0,1)$, we have that $z_j(s)$ is a $\tfrac{\pi}{\sqrt{c}}$-periodic function. The function $\theta_j(s)$ is not periodic and the profile curve $\alpha_j(s)$ is a closed curve if, and only if, $m\, \theta_j\big(\tfrac{\pi}{\sqrt{c}}\big)=2n\pi$, for all $m,n\in\Z$, or equivalently when
    \[T(j) =\frac{\theta_j\left(\frac{\pi}{\sqrt{c}}\right)}{2\pi}\in\Q.\]
    In this context, these surfaces can be generated either by closed curves or by curves that are not closed, and this problem is analyzed by studying the period function $T(j)$. In both scenarios, $X_j(s,t)$ parameterizes \emph{spherical catenoids}, since their profile curves are geometrically classified as \emph{spherical catenaries}. Furthermore, the closed spherical catenaries are non-embedded, possess dihedral symmetry, and give rise to the Otsuki minimal tori in $\S^3_c$ (cf. \cite[Corollary 3.
    5 and Theorem 5.2]{Castro24} for more details).
\end{enumerate}

\subsection{Immersing spherical Ricci tori into \texorpdfstring{$\mathbb{S}^3_c$}{S3c}}
To realize $c$-spherical Ricci tori as complete rotationally invariant surfaces in \(\mathbb{S}^3_c\), using the two-parameter family of metrics presented in Theorem~\ref{thm:ricci-tori}, we first note, by Lemma~\ref{lemma:f}, that \( 0 < f_{m,\ell}(s)^2 < 1/c \) if and only if \(\ell < 1\) and \(\ell < (1 + c\, m)/2\). Thus, we define the following set of admissible pairs:
\[
\Lambda_c' = \left\{(m,\ell) \in \mathbb{R}^2 : 0 < m < \frac{1}{c} \quad \text{and} \quad \sqrt{c\,m} < \ell < \frac{c\,m + 1}{2} \right\} \subset \Lambda_c.
\]
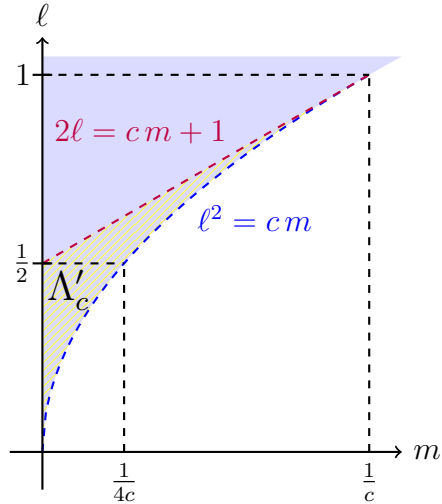
\begin{figure}[ht]
    \centering
    \begin{tikzpicture}[yscale=5, xscale=4.3, thick]

        \fill[blue!20, opacity=0.7]
            (0,sqrt{1.1}) -- plot[domain=0:1.1, samples=100]
            (\x, {sqrt(\x)}) --
            (1.1,sqrt{1.1}) -- cycle;

        \fill[pattern=north east lines, pattern color=yellow!70, domain=0:1, variable=\x]
            plot[domain=0:1, samples=100] ({\x}, {sqrt(\x)})
            -- plot[domain=1:0, samples=100] ({\x}, {(\x+1)/2})
            -- cycle;

        \draw[thick, blue, dashed, domain=0:1, samples=100]
            plot (\x, {sqrt(\x)});

        \draw[thick, purple, dashed, domain=0:1, samples=100]
            plot (\x, {(\x+1)/2});

        \draw[thick,dashed] (0,1) -- (1,1) ;
        \draw[thick,dashed] (1,0) -- (1,1) ;
        \draw[thick,dashed] (1/4,0) -- (1/4,1/2) ;
        \draw[thick,dashed] (0,1/2) -- (1/4,1/2) ;

        \draw[->] (-0.1,0) -- (1.1,0) node[right] {$m$};
        \draw[->] (0,-0.1) -- (0,1.1) node[above] {$\ell$};

        \node at (1,0) [below] {$\frac{1}{c}$};
        \node at (1/4,0) [below] {$\frac{1}{4c}$};
        \node at (0.08, 0.43) {\scalebox{1.3}{$\Lambda_c'$}};
        \node at (0.65, 0.62) {\textcolor{blue}{$\ell^2 = c\,m$}};
        \node at (0.3, 0.85) {\textcolor{purple}{$2\ell = c\,m+1$}};

        \foreach \y/\label in {0.5/{\tfrac{1}{2}}, 1/1} {
            \draw (-0.03,\y) -- (0.03,\y);  
            \node[left] at (0,\y) {$\label$};  
        }
    \end{tikzpicture}
    \caption{The admissible set $\Lambda'_c$.}
    \label{fig:lambda'-set}
\end{figure}

\begin{remark}
For every \(c > 0\), we have \(\{(m, \tfrac{1}{2}) : 0 < m < \tfrac{1}{4c}\} \subset \Lambda_c'\). These parameter choices correspond to the induced metrics of minimal rotational surfaces in \(\mathbb{S}^3_c\), since, by \eqref{eq:sol-minimal}, the function \(z_j(s)\), with \(j \in (0,1)\), coincides with \(f_{m,\tfrac{1}{2}}(s)\), for \(m \in (0,\tfrac{1}{4c})\).
\end{remark}

In the following, we will show that for any \( (m,\ell) \in \Lambda_c' \), the function \(\theta_{m,\ell}(s)\), associated with \(f_{m,\ell}(s)\), is an increasing diffeomorphism defined on \(\mathbb{R}\).

\begin{proposition}\label{prop:theta}
  Let \(c > 0\) and \((m, \ell) \in \Lambda_c'\). Then, the function
\[
\theta_{m,\ell}(s) = \sqrt{c}\, \int_{0}^s \frac{\sqrt{m+(1 - 2\ell)\,f_{m,\ell}^2(\varsigma)}}{f_{m,\ell}(\varsigma)\,(1 - c\, f_{m,\ell}^2(\varsigma))}\, \dif\varsigma
\]
is a well-defined increasing diffeomorphism on \(\mathbb{R}\), satisfying
\[
\theta_{m,\ell}\left(s + n\,\frac{\pi}{\sqrt{c}}\right) = \theta_{m,\ell}(s) + n\, \theta_{m,\ell}\left(\frac{\pi}{\sqrt{c}}\right),
\]
for all \(s \in \mathbb{R}\) and \(n \in \mathbb{Z}\).
\end{proposition}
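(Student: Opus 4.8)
The plan is to establish, in order: that the integrand defining $\theta_{m,\ell}$ is real, smooth, and strictly positive (so that $\theta_{m,\ell}$ is well-defined and strictly increasing); that $\theta_{m,\ell}$ satisfies the stated quasi-periodicity; and that it is surjective onto $\mathbb{R}$, hence a diffeomorphism. Note first that the given integrand is exactly the $\theta'(s)$ of \eqref{theta-function} with $z = f_{m,\ell}$: substituting \eqref{EDO} in the form $f_{m,\ell}'^2 = 2\ell - c f_{m,\ell}^2 - m f_{m,\ell}^{-2}$ gives $1 - c f_{m,\ell}^2 - f_{m,\ell}'^2 = (m + (1-2\ell)f_{m,\ell}^2)/f_{m,\ell}^2$, which recovers the numerator after taking the square root.

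First I would show the integrand is strictly positive on $\Lambda_c'$. The denominator is positive since $f_{m,\ell} > 0$ by Lemma~\ref{lemma:f}, while the defining inequality $\ell < (cm+1)/2$ (with $m < 1/c$) is precisely what guarantees $f_{m,\ell}(s)^2 < 1/c$, so that $1 - c f_{m,\ell}^2 > 0$. For the numerator $m + (1-2\ell)f_{m,\ell}^2$, which is affine in $f_{m,\ell}^2$, its sign on the range $[(\ell - D)/c,\, (\ell + D)/c]$ provided by Lemma~\ref{lemma:f}, where $D = \sqrt{\ell^2 - cm} > 0$, is controlled by its two endpoint values. Using $cm = \ell^2 - D^2$, a short computation factors these as
\[
m + (1-2\ell)\frac{\ell + D}{c} = \frac{(\ell + D)(1 - \ell - D)}{c}, \qquad m + (1-2\ell)\frac{\ell - D}{c} = \frac{(\ell - D)(1 - \ell + D)}{c}.
\]
Both are strictly positive exactly because $0 < \ell - D < \ell + D < 1$, where the crucial inequality $\ell + D < 1$ is equivalent to $2\ell < 1 + cm$, i.e.\ to $(m,\ell) \in \Lambda_c'$. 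Since an affine function positive at both endpoints of an interval is positive throughout, $\theta'_{m,\ell}$ is continuous and strictly positive on $\mathbb{R}$, making $\theta_{m,\ell}$ a smooth, strictly increasing function.

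Next, the quasi-periodicity follows by observing that $\theta'_{m,\ell}$ depends on $s$ only through $f_{m,\ell}(s)^2 = (\ell + D\sin(2\sqrt{c}\,s))/c$, which is $\tfrac{\pi}{\sqrt{c}}$-periodic (consistent with Proposition~\ref{prop:period}). Writing $T = \pi/\sqrt{c}$, I split $\int_0^{s+T} = \int_0^s + \int_s^{s+T}$ and use periodicity to identify $\int_s^{s+T}\theta'_{m,\ell} = \int_0^T \theta'_{m,\ell} = \theta_{m,\ell}(T)$, which yields $\theta_{m,\ell}(s + T) = \theta_{m,\ell}(s) + \theta_{m,\ell}(T)$; iterating gives the relation for all $n \in \mathbb{Z}$. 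Finally, since $\theta_{m,\ell}(T)$ is the integral of a strictly positive function over an interval of positive length, we have $\theta_{m,\ell}(T) > 0$, so $\theta_{m,\ell}(nT) = n\,\theta_{m,\ell}(T) \to \pm\infty$ as $n \to \pm\infty$. Together with continuity and strict monotonicity this gives bijectivity onto $\mathbb{R}$, and since $\theta'_{m,\ell}$ never vanishes, $\theta_{m,\ell}$ is a diffeomorphism.

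The main obstacle is the endpoint positivity of the numerator in the regime $\ell > 1/2$, where the coefficient $(1 - 2\ell)$ is negative and the radicand could a priori fail to be nonnegative. The resolution is the factorization above: the condition $2\ell < 1 + cm$ defining $\Lambda_c'$ is exactly what forces the factor $1 - \ell - D$ to be positive — which is also the geometric condition ensuring the profile curve stays on $\mathbb{S}^3_c$ — so everything hinges on reading off this inequality correctly from the explicit bounds on $f_{m,\ell}^2$.
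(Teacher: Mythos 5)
Your proof is correct, and it rests on the same key estimate as the paper's: the positivity of the radicand $m+(1-2\ell)f_{m,\ell}^2$, equivalently of $1-c\,f_{m,\ell}^2-f_{m,\ell}'^2$, which in both arguments comes down to the inequality $\ell+\sqrt{\ell^2-c\,m}<1$, i.e.\ $2\ell<1+c\,m$ (your endpoint factorization $(\ell\pm D)(1-\ell\mp D)/c$ is the same computation as the paper's constant $L_1=1-\ell-\sqrt{\ell^2-c\,m}>0$, just carried out on the numerator rather than on $1-cf^2-f'^2$). Where you genuinely diverge is in the mechanism for the remaining claims. The paper bounds the integrand between positive constants $M_1\leq\theta'_{m,\ell}\leq M_2$, invokes the Cauchy--Lipschitz theorem to produce $\theta_{m,\ell}$ as the unique solution of the initial value problem \eqref{edo-theta}, derives surjectivity from the uniform lower bound $M_1>0$, and proves the quasi-periodicity by showing that $\tilde\theta_{m,\ell}(s)=\theta_{m,\ell}(s+\tfrac{\pi}{\sqrt{c}})-\theta_{m,\ell}(\tfrac{\pi}{\sqrt{c}})$ solves the same initial value problem, then appealing to uniqueness. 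You instead work directly with the integral representation: smoothness and monotonicity follow from the fundamental theorem of calculus, quasi-periodicity from the shift invariance $\int_s^{s+T}g=\int_0^T g$ for a $T$-periodic integrand, and surjectivity from $\theta_{m,\ell}(nT)=n\,\theta_{m,\ell}(T)\to\pm\infty$. Your route is arguably more elementary and more natural here, since the right-hand side of \eqref{edo-theta} does not depend on the unknown $\theta$, so the ODE is just an antiderivative and Cauchy--Lipschitz is not really needed; what the paper's formulation buys is the explicit two-sided bound $M_1\leq\theta'_{m,\ell}\leq M_2$ (slightly more quantitative information) and a uniqueness argument that would survive unchanged if the equation for $\theta$ did depend on $\theta$ itself. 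One small point of care in your write-up: the equivalence of $\ell+D<1$ with $2\ell<1+c\,m$ uses $\ell<1$, which indeed holds on $\Lambda_c'$ because $2\ell<1+c\,m<2$; it would be worth making that half-line explicit.
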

\begin{proof}
Consider \((m,\ell) \in \Lambda_c'\). Since
\[
1 - c\, f_{m,\ell}(s)^2 - f'_{m,\ell}(s)^2 = \frac{m}{f_{m,\ell}(s)^2} + 1 - 2\ell,
\]
Lemma~\ref{lemma:f} ensures the existence of real constants \(L_1\) and \(L_2\) such that
\[
L_1 \leq 1 - c\, f_{m,\ell}(s)^2 - f'_{m,\ell}(s)^2 \leq L_2.
\]
Moreover, a direct computation yields
\[
L_1 = 1 - \ell - \sqrt{\ell^2 - c\, m} = 1 - c\, \max f_{m,\ell}(s)^2 > 0,
\]
since \((m,\ell)\in\Lambda_c'\). Consequently, for each \((m,\ell) \in \Lambda_c'\), there exist positive constants \(M_1\) and \(M_2\) such that
\[
M_1 \leq \sqrt{c}\,\frac{\sqrt{m+(1 - 2\ell)\,f_{m,\ell}^2(s)}}{f_{m,\ell}(s)\,(1 - c\, f_{m,\ell}^2(s))} \leq M_2.
\]
By the Cauchy–Lipschitz theorem, there exists a unique smooth function \(\theta_{m,\ell}(s)\), well-defined on \(\mathbb{R}\), satisfying
\begin{equation}\label{edo-theta}
\begin{cases}
  \theta'(s) = \displaystyle\sqrt{c}\,\frac{\sqrt{m+(1 - 2\ell)\,f_{m,\ell}(s)^2}}{f_{m,\ell}(s)\,(1 - c\,f_{m,\ell}(s)^2)}, \qquad
  \theta(0) = 0,
\end{cases}
\end{equation}
for each \((m,\ell) \in \Lambda_c'\). Furthermore, since \(\theta'_{m,\ell}(s)\) is bounded below by \(M_1 > 0\), it follows that \(\theta_{m,\ell}(s) \to \pm\infty\) as \(s \to \pm \infty\), so that \(\theta_{m,\ell}\) is an increasing diffeomorphism from \(\mathbb{R}\) onto \(\mathbb{R}\).
Now consider the function $$\tilde{\theta}_{m,\ell}(s) = \theta_{m,\ell}\Big(s + \frac{\pi}{\sqrt{c}}\Big) - \theta_{m,\ell}\Big(\frac{\pi}{\sqrt{c}}\Big).$$ Since \(f_{m,\ell}(s)\) is \(\tfrac{\pi}{\sqrt{c}}\)-periodic, it follows that \(\tilde{\theta}_{m,\ell}(s)\) also satisfies \eqref{edo-theta} on \(\mathbb{R}\). By uniqueness, we must have \(\tilde{\theta}_{m,\ell}(s) = \theta_{m,\ell}(s)\). Hence, for all \(n \in \mathbb{Z}\),
\[
\theta_{m,\ell}\left(s + n \,\frac{\pi}{\sqrt{c}}\right) = \theta_{m,\ell}(s) + n\, \theta_{m,\ell}\left(\frac{\pi}{\sqrt{c}}\right).
\]
\end{proof}
For \((m,\ell) \in \Lambda_c'\), consider the regular curve \(\alpha_{m,\ell}: \mathbb{R} \to (\mathbb{S}^2_c)_+\), defined by
\[
\alpha_{m,\ell}(s) = \left( \frac{1}{\sqrt{c}} \sqrt{1 - c\, f_{m,\ell}(s)^2}\, \cos \theta_{m,\ell}(s), \frac{1}{\sqrt{c}} \sqrt{1 - c\, f_{m,\ell}(s)^2} \,\sin \theta_{m,\ell}(s),f_{m,\ell}(s),\; 0 \right).
\]
We observe that \(\alpha_{m,\ell}\) is a closed curve in \(\mathbb{S}^3_c\) if and only if \(\theta_{m,\ell}(\tfrac{\pi}{\sqrt{c}}) / 2\pi \in \mathbb{Q}\), and it is embedded when \(\theta_{m,\ell}(\tfrac{\pi}{\sqrt{c}}) = 2\pi/n\), for some positive integer \(n\). As a consequence of the above discussion, we obtain the following theorem:

\begin{theorem}\label{thm:immersions}
  Let \(c > 0\) and \((m,\ell) \in \Lambda_c'\), and consider the map \(X_{m,\ell}: \mathbb{R}^2 \to \mathbb{S}^3_c\) defined by \(X_{m,\ell}(s,t) = \phi_t(\alpha_{m,\ell}(s))\).
  Then, \(X_{m,\ell}\) defines an isometric immersion of the $c$-spherical Ricci surface \((\mathbb{R}^2, \dif \sigma_{m,\ell}^2)\) into \(\mathbb{S}^3_c\). Moreover, if \(\theta_{m,\ell}(\tfrac{\pi}{\sqrt{c}})/2\pi \in \mathbb{Q}\), then \(X_{m,\ell}(\mathbb{R}^2)\) is a compact surface in \(\mathbb{S}^3_c\), and it is embedded when \(\theta_{m,\ell}(\tfrac{\pi}{\sqrt{c}}) = 2\pi/n\), for some positive integer \(n\).
\end{theorem}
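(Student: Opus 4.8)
The plan is to treat the three assertions in turn, leaning on the general description of rotational surfaces in Section~\ref{rotational-surfaces} and on Proposition~\ref{prop:theta}.

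\textbf{Isometric immersion.} First I would set $z(s) = f_{m,\ell}(s)$ in the parametrization of Section~\ref{rotational-surfaces} and verify that $\theta_{m,\ell}$ is precisely the solution of the profile equation \eqref{theta-function}. Using \eqref{EDO} to substitute $(f'_{m,\ell})^2 = 2\ell - c\,f_{m,\ell}^2 - m\,f_{m,\ell}^{-2}$ gives
\[
1 - c\, f_{m,\ell}^2 - (f'_{m,\ell})^2 = \frac{m + (1 - 2\ell)\, f_{m,\ell}^2}{f_{m,\ell}^2},
\]
and taking the positive square root (positive because $L_1 > 0$, as in the proof of Proposition~\ref{prop:theta}) matches the integrand defining $\theta_{m,\ell}$. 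Together with the bound $0 < f_{m,\ell}^2 < 1/c$ — which holds exactly because $(m,\ell)\in\Lambda_c'$, so that $\alpha_{m,\ell}$ lands in $(\S^2_c)_+$ — this shows that $X_{m,\ell}$ is a genuine rotational surface whose induced metric, by the computation of Section~\ref{rotational-surfaces}, equals $\dif s^2 + f_{m,\ell}^2\,\dif t^2 = \dif\sigma_{m,\ell}^2$. Since this metric is positive definite, $X_{m,\ell}$ is an isometric immersion of $(\R^2,\dif\sigma_{m,\ell}^2)$ into $\S^3_c$.

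\textbf{Compactness.} Assume $\theta_{m,\ell}(\pi/\sqrt{c})/2\pi = p/q$ in lowest terms. Combining the $\tfrac{\pi}{\sqrt{c}}$-periodicity of $f_{m,\ell}$ with the quasi-periodicity from Proposition~\ref{prop:theta} yields $\theta_{m,\ell}(s + q\tfrac{\pi}{\sqrt{c}}) = \theta_{m,\ell}(s) + 2\pi p$; since an angular shift by $2\pi p$ acts trivially on the first two coordinates, I would conclude $\alpha_{m,\ell}(s + q\tfrac{\pi}{\sqrt{c}}) = \alpha_{m,\ell}(s)$. Hence $X_{m,\ell}$ descends to the compact torus $(\R/q\tfrac{\pi}{\sqrt{c}}\Z)\times\S^1$, and its image is compact as the continuous image of a compact space.

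\textbf{Embeddedness.} Now take $\theta_{m,\ell}(\pi/\sqrt{c}) = 2\pi/n$, so $q = n$ and $p = 1$. Reading off the coordinates of $X_{m,\ell}$ and using $f_{m,\ell} > 0$ together with $\sqrt{c^{-1} - f_{m,\ell}^2} > 0$, two parameter pairs satisfy $X_{m,\ell}(s_1,t_1) = X_{m,\ell}(s_2,t_2)$ if and only if $t_1 \equiv t_2$, $f_{m,\ell}(s_1) = f_{m,\ell}(s_2)$, and $\theta_{m,\ell}(s_1) \equiv \theta_{m,\ell}(s_2) \pmod{2\pi}$. The decisive observation is that on the fundamental interval $[0, n\tfrac{\pi}{\sqrt{c}})$ the function $\theta_{m,\ell}$ is strictly increasing (as $\theta'_{m,\ell} > 0$) and satisfies $\theta_{m,\ell}(n\tfrac{\pi}{\sqrt{c}}) = 2\pi$, hence is a bijection onto $[0,2\pi)$; thus the congruence $\theta_{m,\ell}(s_1) \equiv \theta_{m,\ell}(s_2)$ alone forces $s_1 = s_2$, and then $t_1 = t_2$. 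Consequently $X_{m,\ell}$ is injective on the compact torus $(\R/n\tfrac{\pi}{\sqrt{c}}\Z)\times\S^1$, and an injective immersion of a compact manifold is an embedding.

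\textbf{Main obstacle.} I expect the embeddedness step to be the delicate one. The role of the value $2\pi/n$, rather than a general rational $2\pi p/q$, is exactly to enforce $p = 1$: for $p > 1$ the profile curve winds $p$ times around $\gamma$ over one closing period and self-intersects, leaving the surface merely immersed. Making precise that injectivity reduces \emph{solely} to the strict monotonicity of $\theta_{m,\ell}$ and to the exact total increment $2\pi$ over $n$ periods is the crux; the coordinate-matching in Part~1 and the periodicity bookkeeping in Part~2 are routine by comparison.
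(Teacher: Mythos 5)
Your proposal is correct and follows essentially the same route as the paper, which presents this theorem as a direct consequence of the rotational-surface parametrization of Section~\ref{rotational-surfaces}, Proposition~\ref{prop:theta}, and the closedness/embeddedness criterion for the profile curve $\alpha_{m,\ell}$ stated just before the theorem. Your write-up merely makes explicit the details the paper leaves implicit (the substitution of \eqref{EDO} into \eqref{theta-function}, the quasi-periodicity bookkeeping for compactness, and the injectivity argument on the fundamental domain when $\theta_{m,\ell}(\pi/\sqrt{c}) = 2\pi/n$), all of which are sound.
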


\begin{remark}
 The immersion \(X_{m,\ell}\) is minimal if and only if \(0 < m < 1/4c\) and \(\ell = 1/2\).
\begin{figure}[ht]
    \centering
    \begin{tikzpicture}[yscale=5, xscale=5, thick]

        \fill[blue!20, opacity=0.7]
            (0,sqrt{1.4}) -- plot[domain=0:1.4, samples=100]
            (\x, {sqrt(\x)}) --
            (1.1,sqrt{1.4}) -- cycle;

        \fill[pattern=north east lines, pattern color=yellow!70, domain=0:1, variable=\x]
            plot[domain=0:1, samples=100] ({\x}, {sqrt(\x)})
            -- plot[domain=1:0, samples=100] ({\x}, {(\x+1)/2})
            -- cycle;

        \draw[thick, blue, dashed, domain=0:1.4, samples=100]
            plot (\x, {sqrt(\x)});

        \draw[thick, purple, dashed, domain=0:1, samples=100]
            plot (\x, {(\x+1)/2});

        \draw[thick, olive, domain=0.25:1.18, samples=100, variable=\y]
        plot ({(1/2 - \y)^2}, \y);

        \draw[thick,dashed] (0,1) -- (1,1) ;
        \draw[thick,dashed] (1,0) -- (1,1) ;
        \draw[thick,dashed] (1/4,0) -- (1/4,1/2);
        \draw[thick,teal] (0,1/2) -- (1/4,1/2) ;

        \draw[decorate, decoration={brace}, thick]
    (0,0.55) -- (0.25,0.55)
    node[midway, yshift=11pt] {\footnotesize (I)};
        \node at (0.33,0.47) {\footnotesize (II)};
        \node at (0.53,1.1) {\footnotesize (III)};
        \node[rotate=90] at (-0.03,0.76) {\tiny Round metrics};
        \node at (0.8,0.7) {\tiny Flat metrics};
        \draw[->, thick] (0.63,0.7) -- (0.57,0.73);

        \fill[gray] (0.25, 0.5) circle (0.5pt);

        \draw[->] (-0.1,0) -- (1.4,0) node[right] {$m$};
        \draw[->] (0,-0.1) -- (0,1.2) node[above] {$\ell$};

        \node at (1,0) [below] {\tiny $1$};
        \node at (1/4,0) [below] {$\frac{1}{4}$};
        \node at (0.08, 0.43) {\scalebox{1.3}{$\Lambda_1'$}};

        \foreach \y/\label in {0.5/{\tfrac{1}{2}}, 1/$\tiny{1}$} {
            \draw (-0.03,\y) -- (0.03,\y);  
            \node[left] at (0,\y) {$\label$};  
        }
    \end{tikzpicture}
    \caption{The admissible set \(\Lambda'_1\): (I) metrics induced on minimal rotational surfaces in \(\mathbb{S}^3\); (II) the induced metric of the Clifford torus; (III) metrics induced on Delaunay surfaces in \(\mathbb{R}^3\).}
    \label{fig:lambda'-set2}
\end{figure}
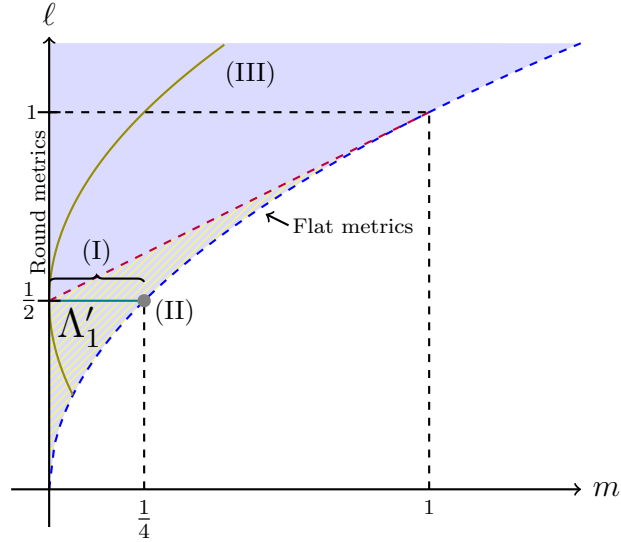

\end{remark}
\subsection{Some explicit examples} Fix $c=1$. To construct compact surfaces in \( \mathbb{S}^3 \) by means of Theorem~\ref{thm:immersions}, it suffices to find parameters \( (m, \ell) \in \Lambda_1' \) such that
\[
\frac{\theta_{m,\ell}(\pi)}{2\pi} \in \mathbb{Q}.
\]

We consider the function \(\Theta:\Lambda_1'\to \mathbb{R}\) defined by \(\Theta(m,\ell) = \theta_{m,\ell}(\pi)\). Observe that \(\Theta\) is continuous and its integrand is continuous and well-defined for all \((m,\ell)\in\Lambda'_1\) and \(s\in\mathbb{R}\).
Moreover, by means of an explicit computation of the bounds of $\Theta(m,\ell)$ using Lemma~\ref{lemma:f}, one verifies that
\[
\lim_{m\to\ell^2}\Theta(m,\ell)
=\frac{\pi}{\sqrt{1-\ell}}, \quad \text{for each } \ell\in(0,1),
\]
and
\[
\lim_{m\to 0}\Theta(m,\ell)
=\pi, \quad \text{for each } \ell\in\left(0,\tfrac{1}{2}\right].
\]

\begin{remark}\label{rmk:embedded-examples}
In particular, no embedded examples can occur for $\ell \leq \tfrac{1}{2}$. In fact, for \((m,\ell)\in\Lambda'_1\) with $\ell \in \left(0,\tfrac{1}{2}\right]$, the condition \(\theta_{m,\ell}(\pi) = \tfrac{2\pi}{n}\) holds if and only if
\[
\pi < \frac{2\pi}{n} < \frac{\pi}{\sqrt{1-\ell}}.
\]
This inequality forces $n=1$ and $4 < \tfrac{1}{1-\ell}$, which is impossible. In the distinguished case $\ell = \tfrac{1}{2}$, the function $m\mapsto \Theta(m,\tfrac{1}{2})$ is the period function of rotational minimal surfaces in $\S^3$, which is monotonically increasing and maps $(0,\tfrac{1}{4})$ onto $(\pi,\sqrt{2}\pi)$ (cf.~\cite{Ben-Li-2015,Otsuki1970,Perdomo-2016}).
\end{remark}

We next show that for $\ell>1/2$ there exist embedded compact examples.

\begin{theorem} \label{thm:embedded}
For each \(m\in(0,\tfrac{9}{16})\), there exists \(\ell\in(\tfrac{1}{2},\tfrac{25}{32})\) such that \((m,\ell)\in\Lambda'_1\) and \(\Theta(m,\ell) = 2\pi\). Consequently, for such parameters \((m,\ell)\), the immersion \(X_{m,\ell}\) defines an embedded compact spherical Ricci surface in \(\mathbb{S}^3\).
\end{theorem}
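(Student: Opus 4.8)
The plan is to fix \(m\in(0,\tfrac{9}{16})\) and to produce the required \(\ell\) by applying the intermediate value theorem to the continuous one-variable map \(\ell\mapsto\Theta(m,\ell)\); embeddedness will then follow at once from Theorem~\ref{thm:immersions}. For fixed \(m\), the admissible parameters with \(\ell>\tfrac12\) fill the interval \((\ell_0,\tfrac{m+1}{2})\), where \(\ell_0=\max\{\tfrac12,\sqrt m\}\). Since \(\tfrac{m+1}{2}<\tfrac{25}{32}\) precisely when \(m<\tfrac{9}{16}\), any solution of \(\Theta(m,\cdot)=2\pi\) found in this interval will automatically lie in \((\tfrac12,\tfrac{25}{32})\). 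It thus suffices to show that \(\Theta(m,\cdot)\) dips below \(2\pi\) near \(\ell_0\) and exceeds \(2\pi\) near \(\tfrac{m+1}{2}\).

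First I would control the lower end. If \(m\in[\tfrac14,\tfrac{9}{16})\), then \(\ell_0=\sqrt m\), and the boundary limit recorded just before Remark~\ref{rmk:embedded-examples} gives \(\Theta(m,\ell)\to\pi/\sqrt{1-\sqrt m}\) as \(\ell\to\sqrt m^{+}\), which is strictly below \(2\pi\) exactly because \(\sqrt m<\tfrac34\). If instead \(m\in(0,\tfrac14)\), then \(\ell_0=\tfrac12\) is interior to \(\Lambda_1'\), and \(\Theta(m,\tfrac12)\) is the period function of the rotational minimal surfaces in \(\mathbb{S}^3\), which by Remark~\ref{rmk:embedded-examples} takes values in \((\pi,\sqrt{2}\,\pi)\subset(0,2\pi)\). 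In either regime \(\Theta(m,\cdot)\) is continuous and stays strictly below \(2\pi\) on a right-neighborhood of \(\ell_0\).

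The central step will be to prove that \(\Theta(m,\ell)\to+\infty\) as \(\ell\to(\tfrac{m+1}{2})^{-}\), that is, as \(\max f_{m,\ell}^2=\ell+\sqrt{\ell^2-m}\) increases to \(1\). The integrand \(\theta_{m,\ell}'(s)=\sqrt{m+(1-2\ell)f_{m,\ell}^2}\,/\,[\,f_{m,\ell}(1-f_{m,\ell}^2)\,]\) (taking \(c=1\)) becomes singular at \(s=\tfrac{\pi}{4}\), where \(f_{m,\ell}^2\) is maximal. At the limiting value \(2\ell=m+1\) one has \(m+(1-2\ell)f^2=m\,(1-f^2)\), so the limiting integrand is \(\sqrt m\,/\,[\,f\sqrt{1-f^2}\,]\); since \(f^2=\tfrac{m+1}{2}+\tfrac{1-m}{2}\sin(2s)\) gives \(1-f^2\sim(1-m)\,(s-\tfrac{\pi}{4})^2\) near \(s=\tfrac{\pi}{4}\), this limiting integrand has a non-integrable (logarithmic) singularity there. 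Because \(\theta_{m,\ell}'\ge0\) and, by continuous dependence of \(f_{m,\ell}\) on \(\ell\), converges pointwise almost everywhere to this limiting integrand, Fatou's lemma forces \(\liminf_{\ell\to(\frac{m+1}{2})^{-}}\Theta(m,\ell)\ge\int_0^{\pi}\sqrt m\,/\,[\,f\sqrt{1-f^2}\,]\,\dif s=+\infty\). I expect this boundary divergence to be the main obstacle; the remaining steps are routine.

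Putting the ends together, \(\Theta(m,\cdot)\) is continuous on \((\ell_0,\tfrac{m+1}{2})\), lies below \(2\pi\) near \(\ell_0\), and diverges to \(+\infty\) near \(\tfrac{m+1}{2}\), so the intermediate value theorem yields \(\ell\in(\ell_0,\tfrac{m+1}{2})\subset(\tfrac12,\tfrac{25}{32})\) with \(\Theta(m,\ell)=2\pi\). Finally, \(\Theta(m,\ell)=\theta_{m,\ell}(\pi)=2\pi\) is exactly the embeddedness condition \(\theta_{m,\ell}(\pi)=2\pi/n\) with \(n=1\), so Theorem~\ref{thm:immersions} guarantees that \(X_{m,\ell}\) parametrizes a compact embedded surface, which is a \(c\)-spherical Ricci surface because \(\dif\sigma_{m,\ell}^2\) is a \(c\)-spherical Ricci metric by Theorem~\ref{thm:ricci-tori}.
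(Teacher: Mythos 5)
Your proposal is correct and follows the same overall strategy as the paper's proof: fix $m$, view $\Theta(m,\cdot)$ as a continuous function of $\ell$ on the admissible interval, show it stays below $2\pi$ near the left end and blows up as $\ell \to \tfrac{m+1}{2}$, apply the intermediate value theorem, and conclude embeddedness from Theorem~\ref{thm:immersions} with $n=1$. The execution differs in two places, both legitimately. (i) For the blow-up, the paper computes the limiting improper integral exactly, via the explicit primitive $\tfrac{\sqrt m}{\sqrt{1-m}}\tanh^{-1}(\cdots)$ of the limiting integrand $g(m,\varsigma)$; you instead identify the limiting integrand $\sqrt m/\bigl[f\sqrt{1-f^2}\bigr]$, observe its non-integrable singularity of order $\abs{s-\tfrac{\pi}{4}}^{-1}$ (your expansion $1-f^2\sim(1-m)(s-\tfrac{\pi}{4})^2$ is right), and use Fatou's lemma to conclude $\liminf\Theta=+\infty$. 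This is softer and in fact cleaner on one point the paper leaves implicit, namely the interchange of limit and integral; the price is that you only obtain a divergent lower bound rather than the exact limit, which is all the IVT needs. (ii) At the left end, the paper runs the IVT on $(\sqrt m,\tfrac{m+1}{2})$ using the single limit $\pi/\sqrt{1-\sqrt m}<2\pi$ and only afterwards invokes Remark~\ref{rmk:embedded-examples} to force $\ell>\tfrac12$; you restrict the window to $(\max\{\tfrac12,\sqrt m\},\tfrac{m+1}{2})$ from the outset, splitting into $m<\tfrac14$ (where you quote $\Theta(m,\tfrac12)\in(\pi,\sqrt2\,\pi)$ from Remark~\ref{rmk:embedded-examples}) and $m\ge\tfrac14$ (where you use the boundary limit), which makes the inclusion $\ell\in(\tfrac12,\tfrac{25}{32})$ automatic. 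One minor citation slip: the limit you attribute to the text ``just before Remark~\ref{rmk:embedded-examples}'' is stated there only in the transverse direction $m\to\ell^2$ with $\ell$ fixed; the version you need, $\ell\to\sqrt m^{\,+}$ with $m$ fixed, is the one the paper establishes inside its own proof, and it follows the same way from the explicit formula for $f_{m,\ell}$ (the integrand converges uniformly, since no singularity forms at this boundary), so nothing breaks.
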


\begin{proof}
Fix \(m\in(0,1)\). From Lemma~\ref{lemma:f}, after computing explicitly the bounds of \(\Theta(m,\ell)\), we find
\[
\lim_{\ell\to\sqrt{m}}\Theta(m,\ell) = \frac{\pi}{\sqrt{1-\sqrt{m}}}.
\]
Hence, for \(0<m<\tfrac{9}{16}\), this limit satisfies \(\displaystyle\lim_{\ell\to\sqrt{m}}\Theta(m,\ell)<2\pi\).
On the other hand, for a fixed \(m\in(0,1)\), it results that
\[
\lim_{\ell\to \tfrac{m+1}{2}}\Theta(m,\ell)
= \int_{0}^{\tfrac{\pi}{4}} g(m,\varsigma)\,\dif\varsigma
  + \int_{\tfrac{\pi}{4}}^{\pi} g(m,\varsigma)\,\dif\varsigma,
\]
where
\[
g(m,\varsigma ) = \frac{2\sqrt{m}}{\sqrt{1-m}\,\sqrt{1-\sin(2\varsigma )}\,\sqrt{1+m+(1-m)\sin(2\varsigma )}}.
\]
Although both integrals are improper, they can be evaluated explicitly, since
\[
\int g(m,\varsigma)\,\dif\varsigma
= \frac{\sqrt{m}}{\sqrt{1-m}}
\tanh^{-1}\!\left(
\frac{\cos(2\varsigma)}{\sqrt{1-\sin(2\varsigma )}\,\sqrt{1+m+(1-m)\sin(2\varsigma )}}
\right).
\]
Therefore,
\begin{align*}
  \lim_{\ell\to \tfrac{m+1}{2}}\Theta(m,\ell)  &= \lim_{\varepsilon\to 0^+}\left(\int_{0}^{\tfrac{\pi}{4}-\varepsilon} g(m,\varsigma)\,\dif\varsigma
  + \int_{\tfrac{\pi}{4}+\varepsilon}^{\pi} g(m,\varsigma)\,\dif\varsigma\right)\\
  &= \lim_{\varepsilon\to 0^+}
\frac{2\sqrt{m}}{\sqrt{1-m}}
\tanh^{-1}\!\left(
\frac{\sqrt{2}\,\cos \varepsilon}{\sqrt{1+m+(1-m)\cos(2\varepsilon)}}
\right)
= +\infty.
\end{align*}

By continuity, it follows that for each \(m\in(0,\tfrac{9}{16})\) there exists \(\ell\in(\sqrt{m},\tfrac{m+1}{2})\) such that \(\Theta(m,\ell)=2\pi\). Finally, Remark~\ref{rmk:embedded-examples} ensures that  \(\ell\in(\tfrac{1}{2},\tfrac{25}{32})\), completing the proof.
\end{proof}

We are able to construct some explicit examples of $1$-spherical Ricci tori by analyzing $\Theta(m,\ell)$ numerically. In the following, we present one immersed and one embedded example, illustrated via the stereographic projection of $\mathbb{S}^3$ onto $\mathbb{R}^3$.

\begin{example} Let $m = 0.51$. For $\ell = 0.73$, we find that $\Theta(m,\ell) = 2\pi$. In this case, the generating curve is a closed embedded curve lying in the upper hemisphere of a totally geodesic $2$-sphere $\mathbb{S}^2 \subset \mathbb{S}^3$.

\begin{figure}[!ht]
    \begin{subfigure}
        \centering
        \includegraphics[width=0.35\linewidth]{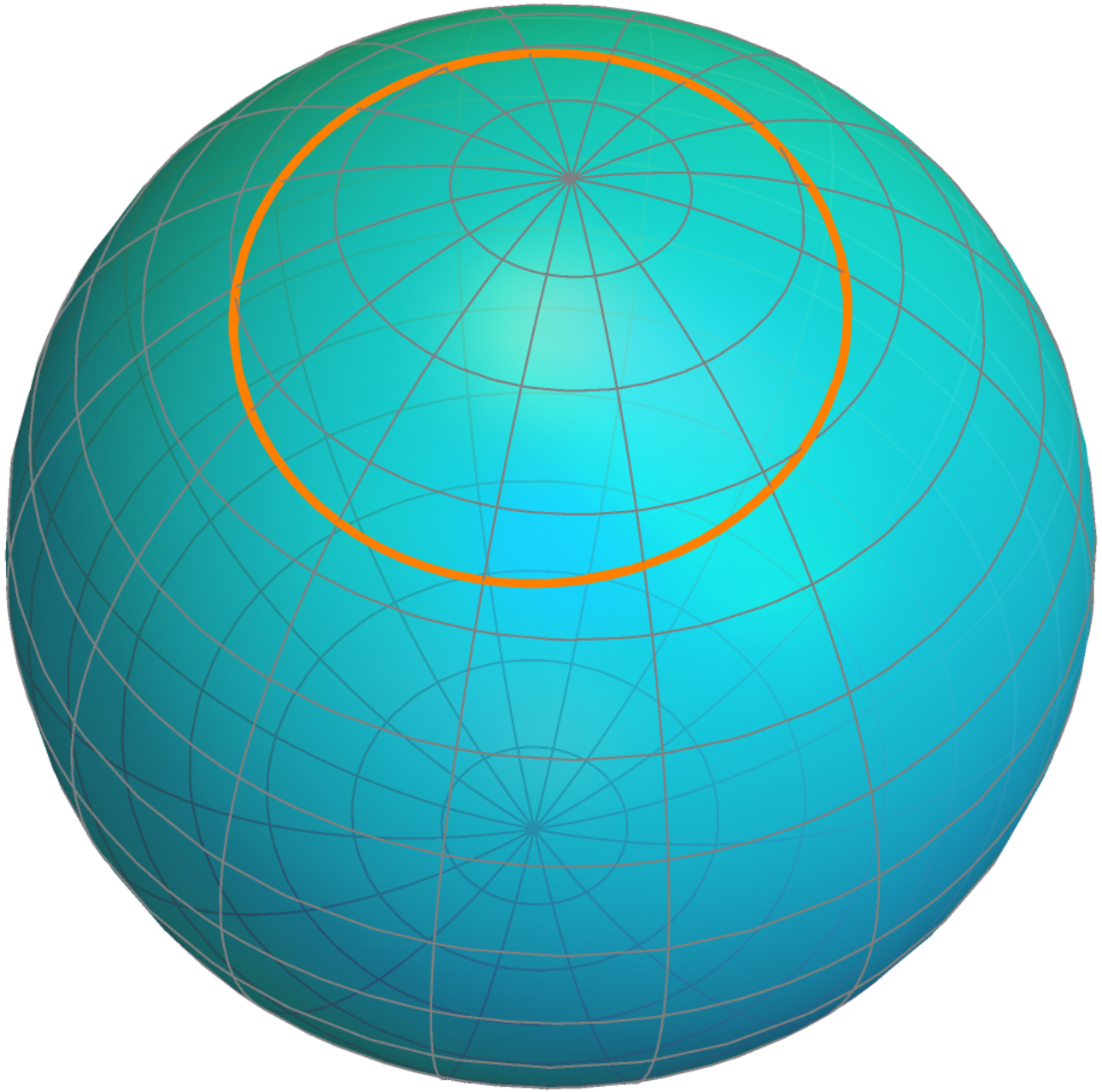}
    \end{subfigure}\hspace{1.5cm}
    \begin{subfigure}
        \centering
        \includegraphics[width=0.45\linewidth]{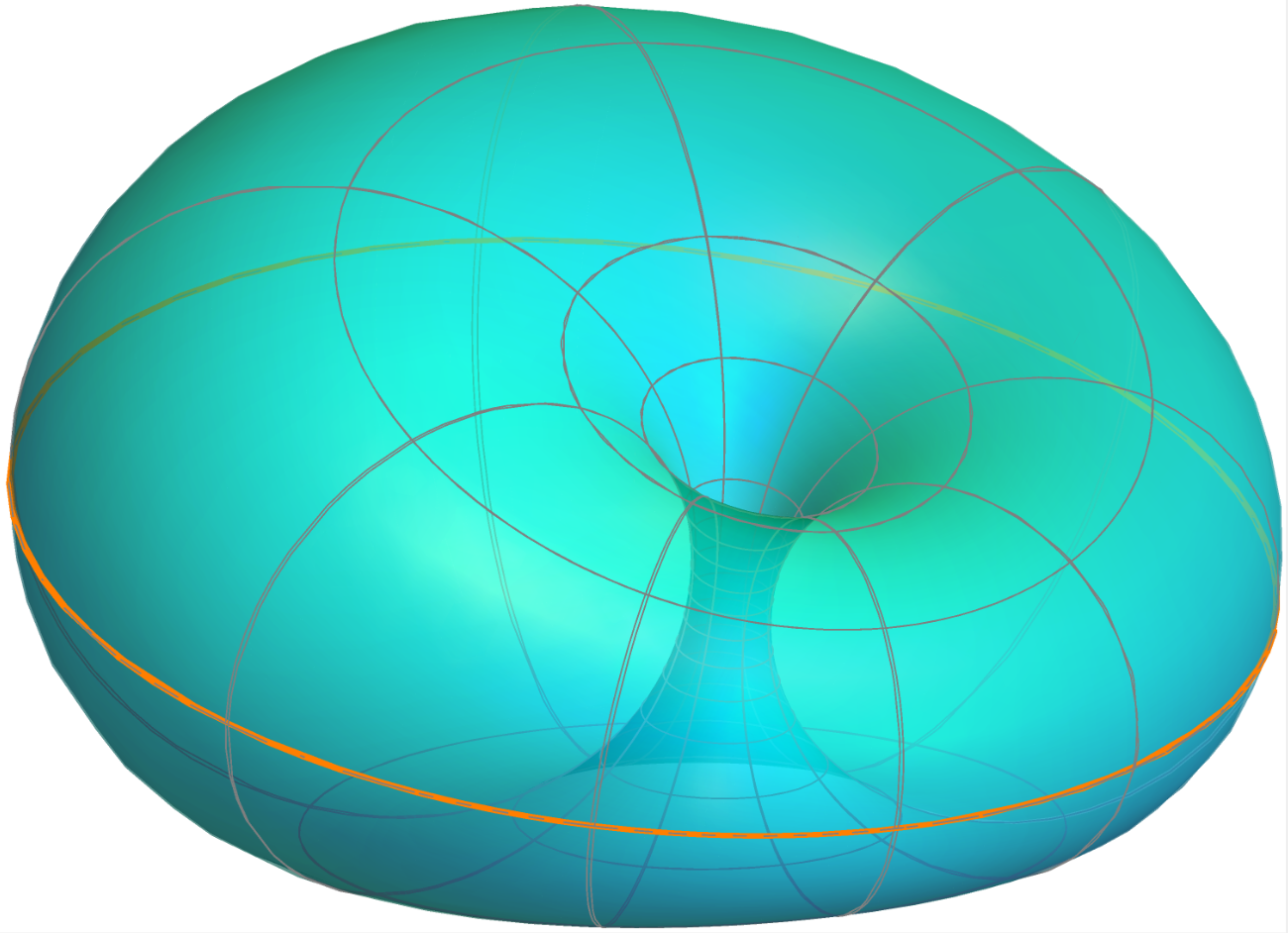}
    \end{subfigure}
    \caption{Generating curve and embedded $1$-spherical Ricci torus with $m = 0.51$ and $\ell = 0.73$.}
\end{figure}
\end{example}

\begin{example}  Let $m = 0.75$. For $\ell = 0.8700024$, we find that $\Theta(m,\ell) = 3\pi$. In this case, the generating curve is a closed immersed curve lying in the upper hemisphere of a totally geodesic $2$-sphere $\mathbb{S}^2 \subset \mathbb{S}^3$.

\begin{figure}[!ht]
    \begin{subfigure}
        \centering
        \includegraphics[width=0.35\linewidth]{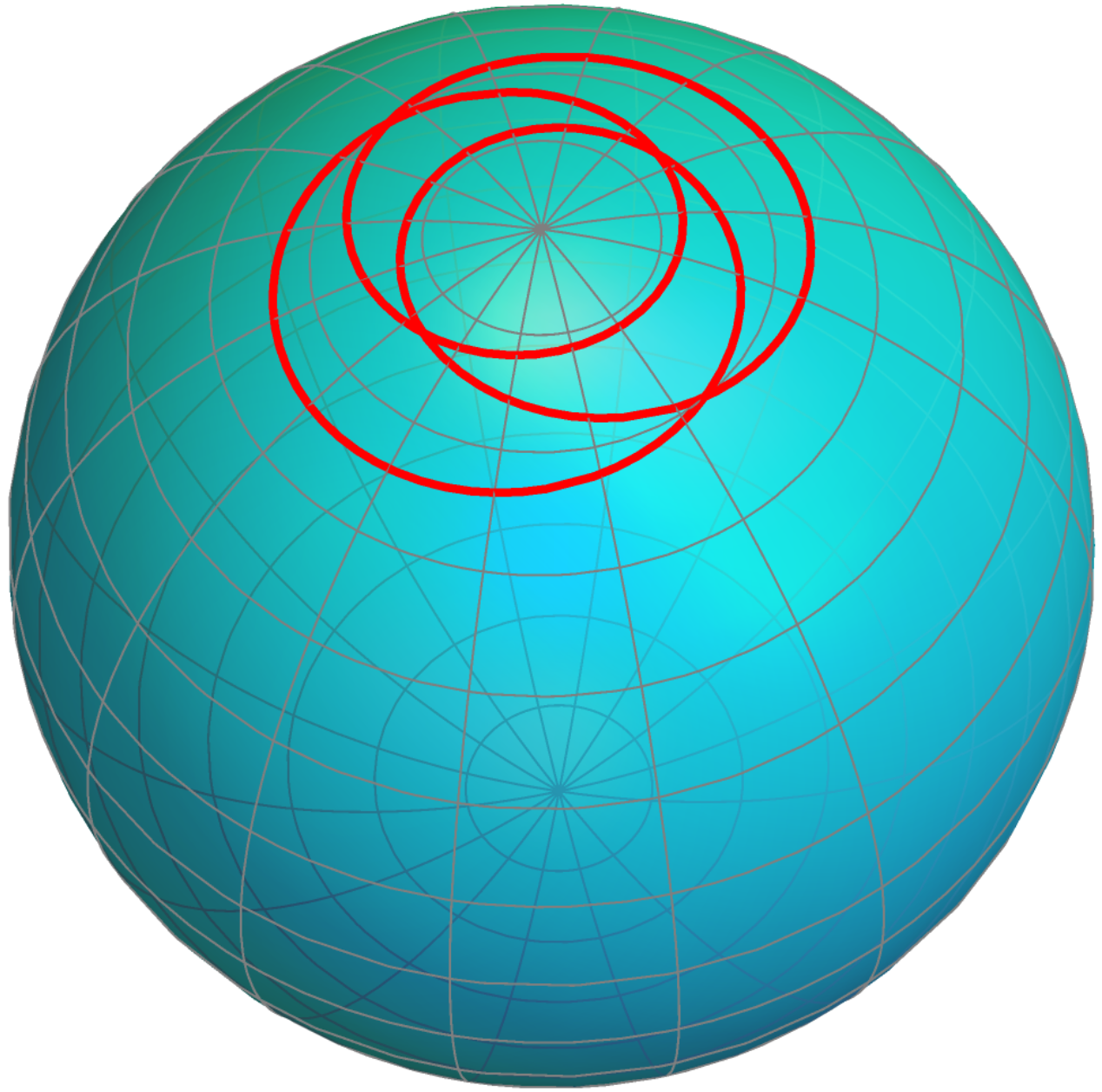}
    \end{subfigure}\hspace{1.5cm}
    \begin{subfigure}
        \centering
        \includegraphics[width=0.46\linewidth]{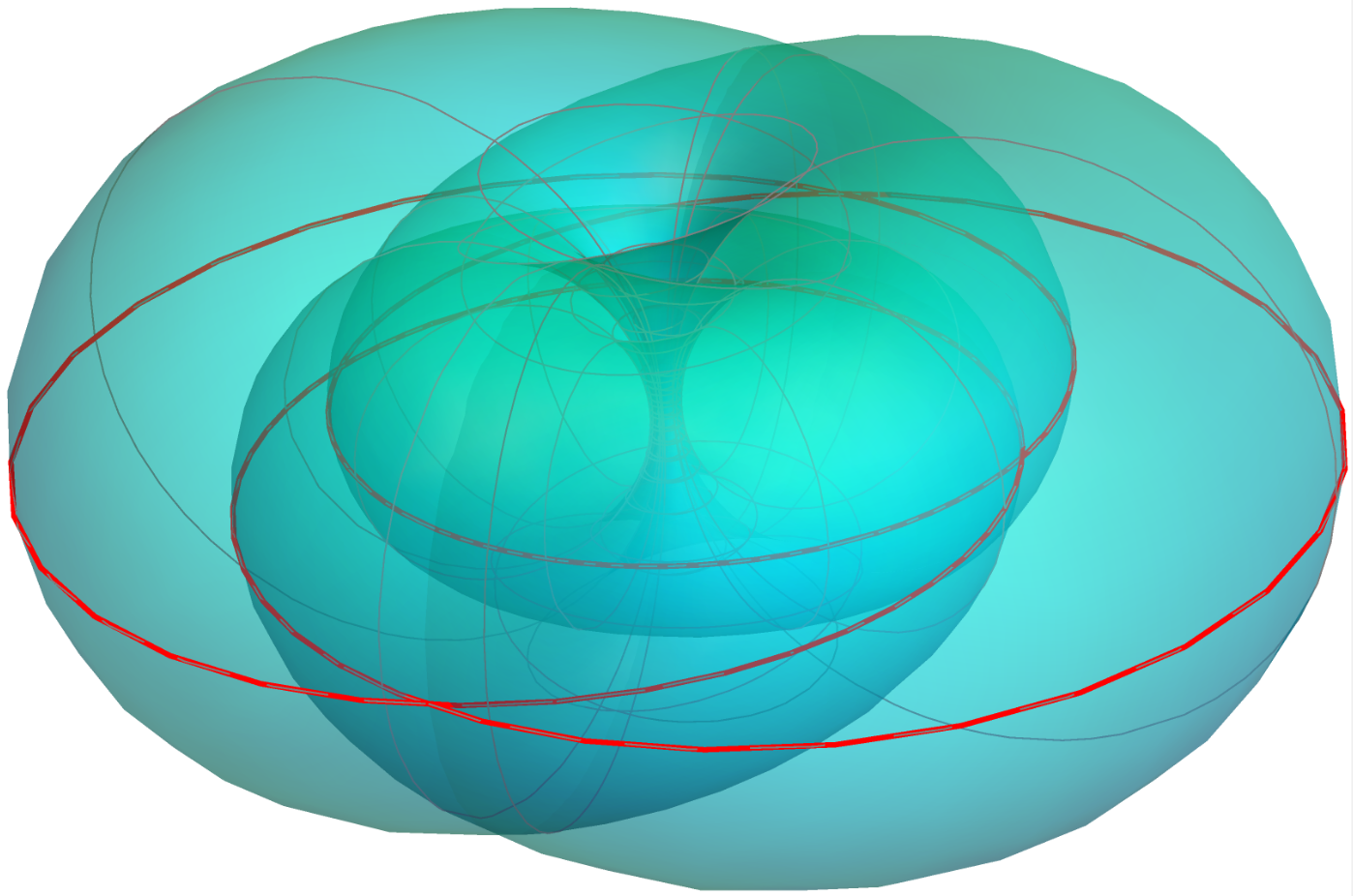}
    \end{subfigure}
    \caption{Generating curve and immersed $1$-spherical Ricci torus with $m = 0.75$ and $\ell \approx 0.87$.}
\end{figure}
\end{example}
\newpage

\bibliographystyle{amsplain}
		\bibliography{references}

@article {Daniel-Zang,
    AUTHOR = {Daniel, Beno\^{i}t and Zang, Yiming},
     TITLE = {Generalized {R}icci surfaces},
   JOURNAL = {J. Geom. Anal.},
  FJOURNAL = {Journal of Geometric Analysis},
    VOLUME = {34},
      YEAR = {2024},
    NUMBER = {8},
     PAGES = {Paper No. 255, 35},
      ISSN = {1050-6926},
   MRCLASS = {53A05 (30F45 53A15 53C42)},
  MRNUMBER = {4755559},
MRREVIEWER = {Theodoros Vlachos},
       DOI = {10.1007/s12220-024-01706-6},
       URL = {https://doi.org/10.1007/s12220-024-01706-6},
}

@article {Ripoll89,
    AUTHOR = {Ripoll, Jaime B.},
     TITLE = {Uniqueness of minimal rotational surfaces in {$S^3$}},
   JOURNAL = {Amer. J. Math.},
  FJOURNAL = {American Journal of Mathematics},
    VOLUME = {111},
      YEAR = {1989},
    NUMBER = {4},
     PAGES = {537--547},
      ISSN = {0002-9327},
   MRCLASS = {53A10 (53C42)},
  MRNUMBER = {1011548},
MRREVIEWER = {Kichoon Yang},
       DOI = {10.2307/2374812},
       URL = {https://doi.org/10.2307/2374812},
}

@article {Castro24,
    AUTHOR = {Castro, Ildefonso and Castro-Infantes, Ildefonso and
              Castro-Infantes, Jes\'{u}s},
     TITLE = {Helicoidal minimal surfaces in the 3-sphere: an approach via
              spherical curves},
   JOURNAL = {Rev. R. Acad. Cienc. Exactas F\'{\i}s. Nat. Ser. A Mat. RACSAM},
  FJOURNAL = {Revista de la Real Academia de Ciencias Exactas, F\'{\i}sicas y
              Naturales. Serie A. Matematicas. RACSAM},
    VOLUME = {118},
      YEAR = {2024},
    NUMBER = {2},
     PAGES = {Paper No. 77, 21},
      ISSN = {1578-7303},
   MRCLASS = {53A04 (74H05)},
  MRNUMBER = {4721840},
       DOI = {10.1007/s13398-024-01574-3},
       URL = {https://doi.org/10.1007/s13398-024-01574-3},
}

@article {Moroianu15,
    AUTHOR = {Moroianu, Andrei and Moroianu, Sergiu},
     TITLE = {Ricci surfaces},
   JOURNAL = {Ann. Sc. Norm. Super. Pisa Cl. Sci. (5)},
  FJOURNAL = {Annali della Scuola Normale Superiore di Pisa. Classe di
              Scienze. Serie V},
    VOLUME = {14},
      YEAR = {2015},
    NUMBER = {4},
     PAGES = {1093--1118},
      ISSN = {0391-173X},
   MRCLASS = {49Q05 (53C27 53C42)},
  MRNUMBER = {3467650},
MRREVIEWER = {Jos\'{e} Miguel Manzano},
}

@article {Ricci-Curbastro,
	AUTHOR = {Ricci-Curbastro, Gregorio},
	TITLE = {Sulla teoria intrinseca delle superficie ed in ispecie di quelle di secondo grado},
	JOURNAL = {Atti R. Ist. Ven. di Lett.},
	VOLUME = {Arti 6},
	YEAR = {1895},
	PAGES = {445--488},
}

@article {DSV2023,
    AUTHOR = {Domingos, Iury and Santos, Roney and Vit\'{o}rio, Feliciano},
     TITLE = {Rotational {R}icci surfaces},
   JOURNAL = {Ann. Mat. Pura Appl. (4)},
  FJOURNAL = {Annali di Matematica Pura ed Applicata. Series IV},
    VOLUME = {203},
      YEAR = {2024},
    NUMBER = {5},
     PAGES = {2075--2093},
      ISSN = {0373-3114},
   MRCLASS = {53C42 (53A10)},
  MRNUMBER = {4797285},
MRREVIEWER = {Zhifei Zhu},
       DOI = {10.1007/s10231-024-01436-0},
       URL = {https://doi.org/10.1007/s10231-024-01436-0},
}

@article {Perdomo-2016,
    AUTHOR = {Perdomo, Oscar M.},
     TITLE = {Rotational surfaces in {$S^3$} with constant mean curvature},
   JOURNAL = {J. Geom. Anal.},
  FJOURNAL = {Journal of Geometric Analysis},
    VOLUME = {26},
      YEAR = {2016},
    NUMBER = {3},
     PAGES = {2155--2168},
      ISSN = {1050-6926},
   MRCLASS = {53C42 (53C10)},
  MRNUMBER = {3511472},
MRREVIEWER = {Carlos Espinoza Pe\~{n}afiel},
       DOI = {10.1007/s12220-015-9622-6},
       URL = {https://doi.org/10.1007/s12220-015-9622-6},
}

@article {Ben-Li-2015,
    AUTHOR = {Andrews, Ben and Li, Haizhong},
     TITLE = {Embedded constant mean curvature tori in the three-sphere},
   JOURNAL = {J. Differential Geom.},
  FJOURNAL = {Journal of Differential Geometry},
    VOLUME = {99},
      YEAR = {2015},
    NUMBER = {2},
     PAGES = {169--189},
      ISSN = {0022-040X},
   MRCLASS = {53A10 (53C42)},
  MRNUMBER = {3302037},
MRREVIEWER = {Oscar M. Perdomo},
       URL = {http://projecteuclid.org/euclid.jdg/1421415560},
}

@article {Wei2006,
    AUTHOR = {Wei, Guoxin},
     TITLE = {Complete hypersurfaces with constant mean curvature in a unit
              sphere},
   JOURNAL = {Monatsh. Math.},
  FJOURNAL = {Monatshefte f\"{u}r Mathematik},
    VOLUME = {149},
      YEAR = {2006},
    NUMBER = {3},
     PAGES = {251--258},
      ISSN = {0026-9255},
   MRCLASS = {53C42 (53C40)},
  MRNUMBER = {2273364},
MRREVIEWER = {Theodoros Vlachos},
       DOI = {10.1007/s00605-005-0377-1},
       URL = {https://doi.org/10.1007/s00605-005-0377-1},
}

@article {Otsuki1970,
    AUTHOR = {\^{O}tsuki, Tominosuke},
     TITLE = {Minimal hypersurfaces in a {R}iemannian manifold of constant
              curvature},
   JOURNAL = {Amer. J. Math.},
  FJOURNAL = {American Journal of Mathematics},
    VOLUME = {92},
      YEAR = {1970},
     PAGES = {145--173},
      ISSN = {0002-9327},
   MRCLASS = {53.74},
  MRNUMBER = {264565},
MRREVIEWER = {J. Vilms},
       DOI = {10.2307/2373502},
       URL = {https://doi.org/10.2307/2373502},
}

@book {petersen-book,
    AUTHOR = {Petersen, Peter},
     TITLE = {Riemannian geometry},
    SERIES = {Graduate Texts in Mathematics},
    VOLUME = {171},
   EDITION = {Third},
 PUBLISHER = {Springer, Cham},
      YEAR = {2016},
     PAGES = {xviii+499},
      ISBN = {978-3-319-26652-7; 978-3-319-26654-1},
   MRCLASS = {53-01 (53C20 53C21 53C23)},
  MRNUMBER = {3469435},
       DOI = {10.1007/978-3-319-26654-1},
       URL = {https://doi.org/10.1007/978-3-319-26654-1},
}

@article {Lawson-1970,
    AUTHOR = {Lawson, Jr., H. Blaine},
     TITLE = {Complete minimal surfaces in {$S\sp{3}$}},
   JOURNAL = {Ann. of Math. (2)},
  FJOURNAL = {Annals of Mathematics. Second Series},
    VOLUME = {92},
      YEAR = {1970},
     PAGES = {335--374},
      ISSN = {0003-486X},
   MRCLASS = {53.04},
  MRNUMBER = {270280},
MRREVIEWER = {C. S. Weaver},
       DOI = {10.2307/1970625},
       URL = {https://doi.org/10.2307/1970625},
}

@article {Lawson-71,
    AUTHOR = {Lawson, Jr., H. Blaine},
     TITLE = {Some intrinsic characterizations of minimal surfaces},
   JOURNAL = {J. Analyse Math.},
  FJOURNAL = {Journal d'Analyse Math\'{e}matique},
    VOLUME = {24},
      YEAR = {1971},
     PAGES = {151--161},
      ISSN = {0021-7670},
   MRCLASS = {53.04},
  MRNUMBER = {284922},
MRREVIEWER = {E. F. Beckenbach},
       DOI = {10.1007/BF02790373},
       URL = {https://doi.org/10.1007/BF02790373},
}

@book {Kenmotsu-2003,
    AUTHOR = {Kenmotsu, Katsuei},
     TITLE = {Surfaces with constant mean curvature},
    SERIES = {Translations of Mathematical Monographs},
    VOLUME = {221},
      NOTE = {Translated from the 2000 Japanese original by Katsuhiro Moriya
              and revised by the author},
 PUBLISHER = {American Mathematical Society, Providence, RI},
      YEAR = {2003},
     PAGES = {x+142},
      ISBN = {0-8218-3479-7},
   MRCLASS = {53A10 (35J60)},
  MRNUMBER = {2013507},
MRREVIEWER = {Magdalena Daniela Toda},
       DOI = {10.1090/mmono/221},
       URL = {https://doi.org/10.1090/mmono/221},
}

@article {dCDS,
    AUTHOR = {de Carvalho, Alcides and Domingos, Iury and Santos, Roney},
     TITLE = {Ruled {R}icci surfaces and curves of constant torsion},
   JOURNAL = {Ann. Global Anal. Geom.},
  FJOURNAL = {Annals of Global Analysis and Geometry},
    VOLUME = {67},
      YEAR = {2025},
    NUMBER = {2},
     PAGES = {Paper No. 13, 13},
      ISSN = {0232-704X},
   MRCLASS = {53A05},
  MRNUMBER = {4875703},
       DOI = {10.1007/s10455-025-09991-2},
       URL = {https://doi.org/10.1007/s10455-025-09991-2},
}
\end{document}